\def\vt{{\vartheta}}
\def\N{{{\Bbb N}}}
\def\Z{{{\Bbb Z}}}
\def\T{{{\Bbb T}}}
\def\R{{\Bbb R}}
\def\C{{\Bbb C}}
\def\l{{\lambda }}
\def\a{{\alpha }}
\def\D{{\Delta }}
\def\a{{\alpha}}
\def\b{{\beta}}
\def\d{{\delta}}
\def\e{{\varepsilon}}
\def\vp{{\varphi}}
\def\t{{\theta }}
\def\g{{\gamma }}
\def\w{{\omega }}
\def\i{{\mathrm{i} }}
\def\){\right)}
\def\({\left(}
\def\Re{\operatorname{Re}}
\def\Im{\operatorname{Im}}
\def\Re{\operatorname{Re}}
\def\Im{\operatorname{Im}}
\def\var{\operatorname{var}}
\def\const{\operatorname{const}}
\numberwithin{equation}{section}
\newtheorem{lemma}{Lemma}[section]
\newtheorem{theorem}{Theorem}[section]
\newtheorem{remark}{Remark}[section]
\begin{document}

\title[On moduli of smoothness and averaged differences]{On moduli of smoothness and \\ [3pt] averaged differences of fractional order}

\author[Yurii
Kolomoitsev]{Yurii
Kolomoitsev$^{1}$}

\thanks{$^1$Universit\"at zu L\"ubeck,
Institut f\"ur Mathematik,
Ratzeburger Allee 160,
23562 L\"ubeck}



\thanks{E-mail address: kolomoitsev@math.uni-luebeck.de}

\date{\today}
\subjclass[2010]{Primary 26A33;
                  Secondary 47B39, 33E30, 42A45,  42A10.} \keywords{modulus of smoothness, averaged differences,   fractional order, Fourier multipliers.}

\begin{abstract}
We consider two types of fractional integral moduli of smoothness, which are widely used in theory of functions and approximation theory. In particular, we obtain new equivalences between these moduli of smoothness and the classical moduli of smoothness. It turns out that for fractional integral moduli of smoothness some pathological effects arise.
\end{abstract}

\maketitle

 \section{Introduction}\label{sec:1}

\setcounter{section}{1}
\setcounter{equation}{0}\setcounter{theorem}{0}

Let $\T\cong[0,2\pi)$ be the circle. As usual, the space
$L_p(\T)$, $0<p<\infty$, consists of measurable
functions which are $2\pi$-periodic  and
$$
\Vert f\Vert_p=\bigg(\frac1{2\pi}\int_{\T}|f(x)|^p \mathrm{d}x\bigg)^{\frac 1p}<\infty.
$$
For simplicity, by $L_\infty(\T)$ we denote the space of all
$2\pi$-periodic continuous functions on $\T$ which is equipped
with the norm
$$
\Vert f\Vert_\infty=\max_{x\in\T}|f(x)|.
$$

The classical (fractional) modulus of smoothness of a function $f\in L_p(\T)$, $0< p\le\infty$,
of order $\b>0$ and step $h>0$ is defined by
\begin{equation}\label{eqmod1}
    \w_\b(f,h)_p=\sup_{0<\d<h} \Vert \D_\d^\b f\Vert_p,
\end{equation}
where
$$
\D_\d^\b f(x)=\sum_{\nu=0}^\infty\binom{\b}{\nu}(-1)^\nu f(x+\nu\d),
$$
$
\binom{\b}{\nu}=\frac{\b (\b-1)\dots (\b-\nu+1)}{\nu!},\quad
\binom{\b}{0}=1.
$

For solving particular problems, the usage of the classical modulus of smoothness may be technically very difficult or its application cannot give sharp and meaningful results. Therefore, it arises the necessity to employ modifications of the classical moduli of smoothness. Thus, in the papers~\cite{DHI}, \cite{K11}, \cite{K}, \cite{Pu}, \cite{Rup}, \cite{T2}, \cite{Tumb} different means of averaging of finite differences and their modifications have been studied and applied for solving several problems of approximation theory. As a rule, to construct  such modifications (special moduli of smoothness) one replaces the shift operator $\tau_h f(x)=f(x+h)$ by some smoothing operator, for example, by the Steklov means.

In this paper, we consider two types of special moduli of smoothness given by the following formulas:
\begin{equation}\label{eqmod2}
    {\rm w}_\b(f,h)_p=\(\frac 1h\int_0^h \Vert \D_\d^\b f\Vert_p^{p_1} {\rm d}\d\)^\frac1{p_1}
\end{equation}
and
\begin{equation}\label{eqmod3}
\widetilde{\w}_\b(f,h)_p=\bigg\Vert\frac 1h \int_0^h \D_\d^\b
f(\cdot) {\rm d}\d \bigg\Vert_p,
\end{equation}
where $\b>0$, $h>0$, and $p_1=\min(1,p)$.

Sometimes, the moduli~\eqref{eqmod2} and~\eqref{eqmod3}  are called the integral moduli of smoothness or averaged differences.
The modulus ${\rm w}_\b(f,h)_p$ is well known and it has been often applied for solving different problems of approximation theory, see  e.g.~\cite[Ch. 6, \S 5]{DL}, \cite{DHI}, \cite{Pu}, \cite{Rup}. The modulus $\widetilde{\w}_\b(f,h)_p$ has been introduced and studied in~\cite{Tvinity}, see also~\cite{Tumb} and~\cite[Ch.8]{TB}, in which it is also called the linearized modulus of smoothness. Some applications of the modulus $\widetilde{\w}_\b(f,h)_p$ as well as some of its modifications can be found in~\cite{Ak}, \cite{KS}, \cite{K}, \cite{KT}, and~\cite{T2}.

Note that \eqref{eqmod2} has sense for all function $f\in L_p(\T)$ with $0<p\le \infty$, while \eqref{eqmod3} can be defined only for integrable functions $f$. At the same time,  \eqref{eqmod3} has some advantages. One of them concerns the direct application of the method of Fourier multipliers.

Recall that a numerical sequence
$\{\l_k\}_{k\in\Z}$ is a Fourier multiplier in $L_p(\T)$, if for all functions $f\in L_p(\T)$, $1\le p\le\infty$, the series
$$
\sum_{k\in\Z} \l_k \widehat{f}_k e^{\i kx},\quad \widehat{f}_k=\frac1{2\pi}\int_0^{2\pi} f(x)e^{-\i kx}{\rm d}x,
$$
is the Fourier series of a function $\Lambda f\in L_p(\T)$ and
$$
\Vert\{\l_k\} \Vert_{M_p}=\Vert \Lambda\Vert_{L_p\mapsto L_p}=\sup_{\Vert f\Vert_p\le 1}\Vert \Lambda f\Vert_p <
\infty
$$
(see, e.g.,~\cite[Ch. I and Ch. VI]{SW}).

Let us illustrate how the method of Fourier multipliers can be used in relation to the modulus $\widetilde{\w}_\b(f,h)_p$. It is easy to see that the Fourier series of the averaged difference $\frac 1h \int_0^h \D_\d^\b
f(x){\rm d}\d$ can be written as follows
\begin{equation}\label{mainf0}
\sum_{k\in\Z} \psi_\b(k h)\widehat{f}_k e^{\i kx},
\end{equation}
where the function $\psi_\b$ is defined by
\begin{equation*}
\psi_\b(t)=\int_0^1 (1-e^{\i t\vp})^\b {\rm d}\vp
\end{equation*}
(here and throughout, we use the principal branch of the logarithm). Thus, if we want to obtain, for example, an inequality of the form
$$
\bigg\Vert \sum_{k\in\Z} \l_k(h) \widehat{f}_k e^{\i kx} \bigg\Vert_p\le C(p,\b)\widetilde{\w}_\b(f,h)_p,
$$
we need only to verify that the sequence $\{\l_k(h)/\psi_\b(hk)\}_{k\in\Z}$ is a Fourier multiplier in $L_p(\T)$ and
$$
\sup_{h>0} \bigg\Vert \bigg\{\frac{\l_k(h)}{\psi_\b(kh)}\bigg\}\bigg\Vert_{M_p}\le C(p,\b),
$$
see also Lemma~\ref{lem3} below. This method  has been used, e.g.,  in~\cite{K} and \cite{T2}, see also \cite[Ch.~8]{TB}.

In approximation theory, it is important to ascertain whether a special modulus of smoothness is equivalent to the classical modulus of smoothness $\w_\b(f,h)_p$. For the moduli of smoothness \eqref{eqmod2} and \eqref{eqmod3} with $\b\in \N$, this problem is well studied. In particular, for all $f\in L_p(\T)$, $1\le p\le \infty$, $\b\in \N$, and $h>0$, we have
\begin{equation}\label{equivmmm}
{\rm w}_\b(f,h)_p\asymp {\w}_\b(f,h)_p\asymp\widetilde{\w}_\b(f,h)_p,
\end{equation}
where $\asymp$ is a two-sided inequality with positive constants independent of  $f$ and $h$.
The equivalence ${\rm w}_\b(f,h)_p\asymp {\w}_\b(f,h)_p$, which also holds in the case $0<p<1$, was proved in~\cite[p. 185]{DL}, see also~\cite[Theorem 1]{Pu} and~\cite[Theorem 3.1]{Rup}. The proof of the second equivalence in~\eqref{equivmmm} can be found in~\cite{Tumb} (see also~\cite[Theorem 8.4.1]{TB} for similar results in the Hardy spaces~$H_p$).

The main purpose of this paper is to investigate relations~\eqref{equivmmm} for positive $\b\not \in \N$.
Is easy to see that for any $\b>0$ and $1\le p\le \infty$, by Minkovsky's inequality and trivial estimates, we have
\begin{equation}\label{lemmm}
\widetilde{\w}_\b(f,h)_p\le {\rm w}_\b(f,h)_p\le {\w}_\b(f,h)_p.
\end{equation}
Concerning the inverse inequalities, we have an unexpected result. In Theorem~\ref{th3} below, we show that there exist $f_0(x)\not\equiv\const$, $\b_0>0$, and $h_0>0$ such that
\begin{equation}\label{0}
\widetilde{\w}_{\b_0}(f_0,h_0)_p=0.
\end{equation}
Since $\w_\b(f,h)_p>0$ for all $f(x)\not\equiv\const$ and $h>0$,~\eqref{0} implies that for any $C>0$ and particular $\b>0$
the inequality
\begin{equation*}
{\w}_\b(f,h)_p\le C\widetilde{\w}_\b(f,h)_p
\end{equation*}
does not hold in general.
At the same time, we have a standard situation for ${\rm w}_\b(f,h)_p$: for all $f\in L_p(\T)$, $0<p\le \infty$, $\b>0$, and $h>0$
$$
{\w}_\b(f,h)_p\le C(p,\b){\rm w}_\b(f,h)_p
$$
(see Theorem~\ref{th1} below).

In the paper, we propose several ways to overcome the pathological property~\eqref{0}. In particular, we show that the following modification of~\eqref{eqmod3} can be used instead of the modulus $\widetilde{\w}_\b(f,h)_p$,
$$
\w_\b^*(f,h)_p=\bigg\Vert\frac 1{h^2} \int_0^h
 \int_0^h \D_{\d_1}^{\lfloor\b\rfloor} \D_{\d_2}^{\{\b\}} f(\cdot) {\rm d}\d_1\,
 {\rm d}\d_2\bigg\Vert_p,
$$
where $\lfloor\b\rfloor$ and $\{\b\}$ are the floor and the fractional part functions of $\b$, respectively. In Theorem~\ref{th5} below, we prove that  $\w_\b^*(f,h)_p$ is equivalent to the classical modulus of smoothness for all $\b>0$. At the same time, $\w_\b^*(f,h)_p$ is a convenient modulus in the sense of  applications of Fourier multipliers.

Finally, we note that property~\eqref{0} seems to be very unnatural for moduli of smoothness. However, even in the study of the approximation of functions by some classical methods, for example by Bernstein-Stechkin polynomials, it has been arisen the necessity to construct special moduli of smoothness for which a condition of type \eqref{0} holds (see~\cite{T2}). One has a similar situation  for some non-classical methods of approximation (see~\cite{KT}).

The paper is organized as follows. In Section 2, we present the main results. In Section 3, we formulate and prove auxiliary results. In Section~4, we prove the main results of the paper.

We denote by $C$ some positive constant depending on the indicated parameters. As usual, $A(f, h)\asymp B(f, h)$ means that there exists a positive constant $C$ such that $C^{-1} A(f,h)\le B(f,h)\le C A(f,h)$ for all $f$ and $h$.


\section{Main results}\label{sec:2}

\setcounter{section}{2}
\setcounter{equation}{0}\setcounter{theorem}{0}

We start from the modulus ${\rm w}_\b(f,h)_p$, for which we have a quite standard result in the following theorem.

\begin{theorem}\label{th1}
    Let $f\in L_p(\T)$, $0<p\le\infty$, $\b\in \N\cup (1/p_1-1,\infty)$, and $h\in (0,1)$. Then
\begin{equation*}
{\rm w}_\b(f,h)_p\asymp {\w}_\b(f,h)_p\,.
\end{equation*}
\end{theorem}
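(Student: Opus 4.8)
The inequality ${\rm w}_\b(f,h)_p\le \w_\b(f,h)_p$ holds for every $0<p\le\infty$: since $\|\D_\d^\b f\|_p\le \w_\b(f,h)_p$ for $0<\d<h$, averaging the $p_1$-th powers over $\d\in(0,h)$ gives the claim. Hence the entire content of the theorem is the reverse estimate $\w_\b(f,h)_p\le C(p,\b)\,{\rm w}_\b(f,h)_p$, and, since $p_1=\min(1,p)$ and $\|\cdot\|_p^{p_1}$ is subadditive, it suffices to prove
\[
\|\D_t^\b f\|_p^{p_1}\le \frac{C(p,\b)}{h}\int_0^h\|\D_\d^\b f\|_p^{p_1}\,{\rm d}\d \qquad(\star)
\]
for every fixed $t\in(0,h)$; taking the supremum over $t$ then yields the theorem.

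The plan rests on two ingredients. The first is a fractional doubling identity at the operator level: writing $\tau_\d f(\cdot)=f(\cdot+\d)$ and $\D_\d^\b=(I-\tau_\d)^\b$, the factorization $1-e^{2\i\d k}=(1+e^{\i\d k})(1-e^{\i\d k})$ gives
\[
\D_{2\d}^\b f=(I+\tau_\d)^\b\D_\d^\b f,\qquad (I+\tau_\d)^\b=\sum_{\nu=0}^\infty\binom\b\nu\tau_{\nu\d}.
\]
Taking $\|\cdot\|_p^{p_1}$-quasinorms and using translation invariance yields $\|\D_{2\d}^\b f\|_p\le A(p,\b)\|\D_\d^\b f\|_p$ with $A(p,\b)^{p_1}=\sum_\nu|\binom\b\nu|^{p_1}$. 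This series is finite precisely under the hypothesis $\b\in\N\cup(1/p_1-1,\infty)$, because $|\binom\b\nu|\asymp \nu^{-\b-1}$ for $\b\notin\N$; this is exactly where the restriction on $\b$ enters. The second ingredient is an averaged step-change estimate $\|\D_t^\b f\|_p^{p_1}\le \frac{C}{|I|}\int_{I}\|\D_s^\b f\|_p^{p_1}\,{\rm d}s$, valid for $t\in(0,h)$ and a suitable interval $I\subset(0,h)$. For $p\ge1$ this is obtained by the Fourier multiplier method of the Introduction: one represents $(1-e^{\i t k})^\b$ as an average over $s\in I$ of the symbols $(1-e^{\i s k})^\b$ against weights $\{\lambda_s(k)\}_k$ that are uniformly bounded multipliers, the averaging in $s$ being what regularizes the zeros of $s\mapsto(1-e^{\i s k})^\b$. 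For $0<p<1$ one argues directly, combining the doubling identity with the $p_1$-subadditive triangle inequality.

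With these ingredients, $(\star)$ follows the scheme of the integer-order case. For $t$ comparable to $h$, say $t\in(h/2,h)$, the averaged estimate applies with $I\subset(0,h)$ of length $\asymp h$ and $(\star)$ is immediate. For small $t\le h/2$ one compares $\D_t^\b f$ with the family $\{\D_s^\b f\}$ for $s$ ranging over the \emph{long} interval $(t,h)$, mirroring the first-order identity $\D_t f=\D_s f-\tau_t\D_{s-t}f$; integrating over this interval of length $\ge h/2$ produces $(\star)$ with a constant independent of $t$, $h$, and $f$, and, crucially, without any dyadic iteration that would let the constant blow up as $t\to0$. Taking the supremum over $t\in(0,h)$ finishes the argument.

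The main obstacle is the averaged step-change estimate for fractional $\b$. Unlike the integer case, fractional differences admit no finite telescoping identity expressing $\D_t^\b$ through $\D_s^\b$ and $\D_{s-t}^\b$, and the naive dilation symbol $\bigl((1-e^{\i tk})/(1-e^{\i sk})\bigr)^\b$ is \emph{not} a bounded multiplier, because of the zeros of its denominator. One must therefore build an honest averaging (integral) representation with an integrable kernel and verify that the attendant multiplier norms (for $p\ge1$) or binomial series (for $0<p<1$) are finite --- again exactly under $\b\in\N\cup(1/p_1-1,\infty)$. Once this estimate is in place, the remaining bookkeeping is routine.
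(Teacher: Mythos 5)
Your reduction to $(\star)$ and the easy direction are fine, and the doubling inequality $\Vert\D_{2\d}^\b f\Vert_p\le A(p,\b)\Vert\D_\d^\b f\Vert_p$ with $A(p,\b)^{p_1}=\sum_\nu|\binom{\b}{\nu}|^{p_1}$ is correct and does isolate where the hypothesis $\b\in\N\cup(1/p_1-1,\infty)$ enters. But the proof is not complete: everything hinges on the ``averaged step-change estimate'' $\Vert\D_t^\b f\Vert_p^{p_1}\le \frac{C}{|I|}\int_I\Vert\D_s^\b f\Vert_p^{p_1}\,{\rm d}s$ for $t$ small relative to $h$, and you never construct it --- you yourself point out in your last paragraph that there is no finite fractional telescoping identity, that the quotient symbol $((1-e^{\i tk})/(1-e^{\i sk}))^\b$ is not a bounded multiplier because of the zeros of the denominator, and that ``one must build an honest averaging representation,'' yet no such representation is exhibited. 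Since $(\star)$ is essentially a restatement of the theorem, asserting this estimate amounts to assuming what is to be proved. The situation is worse for $0<p<1$, where the multiplier machinery is unavailable and ``one argues directly'' is not an argument; iterating the doubling inequality from step $t$ up to step $h$ costs a factor $(h/t)^{\log_2 A}$ that blows up as $t\to 0$, which you correctly say must be avoided but do not avoid.

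The paper closes exactly this gap by a different route: write $f=(f-T_n)+T_n$ with $T_n$ a best approximant of degree $n=[1/h]$. For the polynomial part, the Boas-type inequality of Lemma~\ref{lemNS}, namely $h^{-\b}\Vert\D_h^\b T_n\Vert_p\asymp\d^{-\b}\Vert\D_\d^\b T_n\Vert_p$ for $0<\d,h\le\pi/n$, is precisely the missing step-comparison; it holds for all $\b>0$ and all $0<p\le\infty$ because it is restricted to polynomials of degree at most $n$. For the error $f-T_n$, Jackson's inequality~\eqref{eqI12} bounds $E_n(f)_p$ by an integer-order modulus $\w_r(f,h)_p$ with $r=\b+\a\in\N$, $\a>1/p_1-1$, for which the equivalence with the averaged modulus is already known from~\eqref{equivmmm}; the factorization $\D_\d^r=\D_\d^\a\D_\d^\b$ together with the $L_p$-boundedness of $\D_\d^\a$ then returns everything to $\frac1h\int_0^h\Vert\D_\d^\b f\Vert_p^{p_1}\,{\rm d}\d$. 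If you wish to salvage your scheme, you must actually prove your averaged step-change estimate; the polynomial detour above is the standard way to do so, and your restriction on $\b$ reappears there through the exponent in Jackson's inequality rather than through summability of binomial coefficients.
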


The next theorem is the key result of the paper.

\begin{theorem}\label{th3} There exists a set $\{\b_k\}_{k=0}^\infty$ such that $\b_0\in (4,5)$, $\b_k\to\infty$ as $k\to\infty$, and the following assertions hold:

\smallskip

$(i)$ for each function $f\in L_p(\T)$, $1\le p\le\infty$, and for all $\b\in (0,\b_0)\cup \N$ and $h\in (0,1)$ we have
    $$
     \widetilde{\w}_\b(f,h)_p\asymp \w_\b(f,h)_p\,;
    $$

$(ii)$ for each $k\in \Z_+$ there exists $t_k>2\pi$ such that for any $n\in \Z\setminus \{0\}$ we have
 \begin{equation*}
   \frac 1h\int_0^{h} \D_\d^{\b_k} e_n(x){\rm d}\d=0,\quad x\in \T,
 \end{equation*}
where $e_n(x)=e^{inx}$ and $h=t_k/|n|$. In particular, for all $0<p\le \infty$, we have
\begin{equation}\label{02}
  \widetilde{\w}_{\b_k}(e_n,h)_p=0\,.
\end{equation}
\end{theorem}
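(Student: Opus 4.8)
The plan is to treat both parts through the Fourier symbol $\psi_\b$ and to reduce the vanishing statement in $(ii)$ to locating zeros of the function $t\mapsto\psi_\b(t)$. First I would record the symbol computation. Since $\D_\d^\b e_n(x)=e_n(x)(1-e^{\i n\d})^\b$, the substitution $\d=h\vp$ gives
$$
\frac1h\int_0^h\D_\d^\b e_n(x)\,\mathrm d\d=\psi_\b(nh)\,e_n(x),
$$
in accordance with~\eqref{mainf0}. Hence the left-hand side in $(ii)$ vanishes identically precisely when $\psi_{\b_k}(nh)=0$; with $h=t_k/|n|$ we have $nh=\pm t_k$, and since $\overline{\psi_\b(t)}=\psi_\b(-t)$ for real $\b$ (the argument of $1-e^{\i u}$ stays in the right half-plane, so conjugation commutes with the principal power), it suffices to produce $\b_k$ and $t_k>2\pi$ with $\psi_{\b_k}(t_k)=0$. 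Once this is available, $\frac1h\int_0^h\D_\d^{\b_k}e_n\,\mathrm d\d\equiv0$ forces $\widetilde{\w}_{\b_k}(e_n,h)_p=0$ for every $0<p\le\infty$, which is~\eqref{02}. For $(i)$ the inequality $\widetilde{\w}_\b(f,h)_p\le\w_\b(f,h)_p$ is already contained in~\eqref{lemmm}, so only the reverse estimate $\w_\b(f,h)_p\le C\,\widetilde{\w}_\b(f,h)_p$ must be shown, and I would obtain it by the multiplier method of Lemma~\ref{lem3}: writing $\D_\d^\b f=\Lambda_{\d,h}\big(\tfrac1h\int_0^h\D_\s^\b f\,\mathrm d\s\big)$ with $\Lambda_{\d,h}$ carrying the symbol $(1-e^{\i k\d})^\b/\psi_\b(kh)$, the estimate follows once $\sup_{0<\d<h<1}\Vert\{(1-e^{\i k\d})^\b/\psi_\b(kh)\}\Vert_{M_p}\le C(p,\b)$.

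The analytic core is therefore the study of the zeros of $\psi_\b$. Writing $\psi_\b(t)=\Phi_\b(t)/t$ with $\Phi_\b(t)=\int_0^t(1-e^{\i u})^\b\,\mathrm du$, I would first establish the structural facts that drive everything: the integrand $(1-e^{\i u})^\b$ is $2\pi$-periodic in $u$, and the $\nu=0$ term of its absolutely convergent binomial expansion gives $\int_0^{2\pi}(1-e^{\i u})^\b\,\mathrm du=2\pi$, whence
$$
\Phi_\b(t+2\pi)=\Phi_\b(t)+2\pi.
$$
Consequently a zero of $\psi_\b$ with $t>2\pi$ exists if and only if for some $s\in[0,2\pi)$ and some integer $m\ge1$ one has $\Phi_\b(s)=-2\pi m$; that is, the planar curve $s\mapsto\Phi_\b(s)$ must cross the negative real axis at a point of real part $\le-2\pi$. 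Using $1-e^{\i u}=2\sin(u/2)\,e^{\i(u-\pi)/2}$ on $(0,2\pi)$, I would rewrite $\Phi_\b$ as the oscillatory integral $2^{\b+1}\int_{-\pi/2}^{(t-\pi)/2}\cos^\b\vp\,e^{\i\b\vp}\,\mathrm d\vp$ and analyse $\Re\Phi_\b$ and $\Im\Phi_\b$ separately, keeping at hand the closed-form value $\int_{-\pi/2}^{\pi/2}\cos^\b\vp\,e^{\i\b\vp}\,\mathrm d\vp=\pi 2^{-\b}$ (consistent with $\Phi_\b(2\pi)=2\pi$) and its partial-range refinements.

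The main obstacle is to prove that such a crossing actually occurs and, moreover, that the two real conditions $\Re\Phi_\b(s)=-2\pi m$ and $\Im\Phi_\b(s)=0$ can be met exactly for a whole sequence $\b_k\to\infty$. I expect to carry this out by asymptotic analysis of the oscillatory integral as $\b\to\infty$, showing that the amplitude of the oscillation of $s\mapsto\Phi_\b(s)$ within one period grows, so that the curve winds around the origin more and more while $\Im\Phi_\b$ keeps changing sign. A two-parameter intermediate-value argument in $(\b,s)$ then converts these sign changes into exact solutions for infinitely many $\b_k\to\infty$, with the corresponding $t_k=s_k+2\pi m_k>2\pi$; the smallest pathological value is pinned to the interval $(4,5)$ by a direct estimation of $\Phi_\b$ for $\b$ near those numbers. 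This oscillatory-integral analysis — simultaneously forcing the curve into $\Re\le-2\pi$ and solving the coupled pair of equations — is where the real work lies.

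Finally, for $(i)$ the same analysis shows that for $0<\b<\b_0$ the curve $\Phi_\b$ avoids the origin, so $\psi_\b$ has no zero on $(0,\infty)$; together with the trivial behaviour $\psi_\b(t)\sim c_\b\,t^\b$ as $t\to0$ with $c_\b\ne0$, which is matched by the numerator $(1-e^{\i k\d})^\b$ since $\d<h$, and with the smoothness of $\psi_\b$ away from that single point, Lemma~\ref{lem3} yields the required uniform bound on the multiplier norm and hence $\w_\b(f,h)_p\le C\,\widetilde{\w}_\b(f,h)_p$. For $\b\in\N$ the equivalence is the classical one recorded in~\eqref{equivmmm}, so combining with~\eqref{lemmm} gives $\widetilde{\w}_\b(f,h)_p\asymp\w_\b(f,h)_p$ on all of $(0,\b_0)\cup\N$.
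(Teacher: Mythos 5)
Your reduction is sound and matches the paper's: the symbol identity $\frac1h\int_0^h\D_\d^\b e_n\,\mathrm d\d=\psi_\b(nh)e_n$, the conjugation symmetry $\overline{\psi_\b(t)}=\psi_\b(-t)$, the translation law $\Phi_\b(t+2\pi)=\Phi_\b(t)+2\pi$, and the polar rewriting of $\Phi_\b$ as $2^{\b+1}\int_{-\pi/2}^{(t-\pi)/2}\cos^\b\vp\,e^{\i\b\vp}\,\mathrm d\vp$ are all correct and are exactly the paper's starting points (its functions $x_\b,y_\b$ and the relation $\g_{\b,k+1}=\g_{\b,k}+2\pi$). The multiplier framework for part $(i)$ is likewise the right tool (it is Lemma~\ref{lem0}, quoted from Trigub). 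But the proof has a genuine gap at precisely the point you yourself flag as ``where the real work lies'': you never actually establish (a) that $\psi_\b$ has no real zero for any $\b\in(0,\b_0)$ with some $\b_0>4$, (b) that a zero appears for some $\b_0\in(4,5)$, or (c) that zeros recur for a sequence $\b_k\to\infty$. These three facts are the entire content of the theorem; asserting that ``asymptotic analysis'' and ``a two-parameter intermediate-value argument'' will deliver them is a plan, not a proof. In particular, nothing in your write-up rules out a zero of $\psi_\b$ for, say, $\b\in(2,3)$, which would destroy part $(i)$; the lower bound $\b_0>4$ is not a soft statement.

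For comparison, the paper's Lemma~\ref{lem1} carries out this work explicitly: for $\b\in(0,4)$ it partitions $(0,\pi]$ into four intervals of monotonicity of $x_\b$ and $y_\b$ (read off from $x_\b'(t)=\cos\frac{\b(t-\pi)}2(2\sin\frac t2)^\b$ and the analogous formula for $y_\b'$) and checks signs on each, then proves $\g_{\b,1}\subset\{\Re z>0\}$ via explicit numerical bounds on $x_\b(\tau_1)$; for the zero in $(4,5)$ it defines $\vt(\b)$ implicitly by $y_\b(\vt(\b))=0$, proves continuity of $F(\b)=x_\b(\vt(\b))$ via a quantitative mean-value estimate, and verifies $F(4)>0$, $F(5)<0$ by evaluating $z_5$ at $13\pi/5$, $27\pi/10$, $14\pi/5$; and for $\b_k\to\infty$ it shows $x_{2n}(\pi(1-1/n))\to-\infty$ using $\binom{2n}{n}\sim 4^n/\sqrt{\pi n}$, so that the curve's real part drops by more than $2\pi$ between suitable even parameters and the intermediate value theorem applies in each successive period. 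You would need to supply arguments of this kind (or a genuinely completed asymptotic analysis with error control) before the theorem is proved; as written, the submission establishes only the easy reductions.
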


Theorem~\ref{th3} implies that, for small values of $\b$, the modulus~\eqref{eqmod3} has the same properties as the classical modulus of smoothness~\eqref{eqmod1}, but for some $\b >4$  the modulus~\eqref{eqmod3} has a pathological behaviour.

In the next results, we show several ways to overcome the effect of~\eqref{02}.
In particular problems of approximation theory, it is enough to know the behaviour of moduli of smoothness on a set of trigonometric polynomials. It turns out that one can reduce the influence of property~\eqref{02} in such situation.

Let $\mathcal{T}_n$ be the set of all trigonometric
polynomials of order at most $n$,
\begin{equation*}
\mathcal{T}_n=\bigg\{T(x)=\sum_{\nu=-n}^n c_\nu {\rm e}^{\i\nu x}~:~
c_\nu \in \C\bigg\}.
\end{equation*}

\begin{theorem}\label{th4} Let $1\le p\le\infty$, $\b>0$, and $n\in\N$. Then for each $T_n\in \mathcal{T}_n$ and for each $h\in
(0,1/n)$ we have
    $$
     \widetilde{\w}_\b(T_n,h)_p\asymp \w_\b(T_n,h)_p.
    $$
\end{theorem}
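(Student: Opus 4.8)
The inequality $\widetilde{\w}_\b(T_n,h)_p\le\w_\b(T_n,h)_p$ is already contained in~\eqref{lemmm}, so only the reverse estimate needs work, and for it the plan is to use the Fourier multiplier method described in the introduction. Fix $\d\in(0,h)$. The difference $\D_\d^\b T_n$ and the averaged difference $\frac1h\int_0^h\D_{\d'}^\b T_n\,{\rm d}\d'$ have Fourier coefficients $(1-e^{\i\d k})^\b\widehat{(T_n)}_k$ and $\psi_\b(kh)\widehat{(T_n)}_k$, respectively; hence $\D_\d^\b T_n=\Lambda\big(\frac1h\int_0^h\D_{\d'}^\b T_n\,{\rm d}\d'\big)$, where $\Lambda$ is the multiplier operator with symbol $\mu_k=(1-e^{\i\d k})^\b/\psi_\b(kh)$. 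Since $T_n\in\mathcal{T}_n$, only the indices $|k|\le n$ matter, and writing $s=\d/h\in(0,1)$ we have $\mu_k=g_s(kh)$ with
$$
g_s(u)=\frac{(1-e^{\i su})^\b}{\psi_\b(u)}.
$$
I would show that $\{g_s(kh)\}_{|k|\le n}$ extends to a Fourier multiplier whose $M_p$ norm is bounded by a constant depending only on $\b$, uniformly in $n$, in $h\in(0,1/n)$, and in $s\in(0,1)$. This gives $\Vert\D_\d^\b T_n\Vert_p\le C(\b)\,\widetilde{\w}_\b(T_n,h)_p$, and taking the supremum over $\d\in(0,h)$ completes the proof.

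The crux is that the apparent fractional-power singularity of $g_s$ at $u=0$ is removable. Setting $\Phi(z)=(1-e^{\i z})/(-\i z)$, which is entire with $\Phi(0)=1$, one gets $(1-e^{\i su})^\b=(-\i u)^\b s^\b\Phi(su)^\b$ and, integrating in $\vp$,
$$
\psi_\b(u)=\int_0^1(1-e^{\i u\vp})^\b\,{\rm d}\vp=(-\i u)^\b\,\Psi(u),\qquad \Psi(u)=\int_0^1\vp^\b\,\Phi(u\vp)^\b\,{\rm d}\vp .
$$
Thus the factors $(-\i u)^\b$ cancel and
$$
g_s(u)=\frac{s^\b\,\Phi(su)^\b}{\Psi(u)},\qquad \Psi(0)=\frac1{\b+1}\neq0 .
$$
Since $\Phi(w)$ lies in the open right half-plane for real $|w|<\pi$, the principal powers $\Phi(su)^\b$ and $\Psi(u)$ are smooth (indeed real-analytic) in $u$ near the origin, so $g_s$ is a genuinely smooth symbol there, with $g_s(0)=(\b+1)s^\b$.

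It remains to localize. Because $|k|\le n$ and $0<h<1/n$, every node satisfies $|kh|<1$, and on $(-1,1)$ the map $\psi_\b$ vanishes only at $0$ (this non-vanishing away from the origin, established in Section~3, is exactly where the hypothesis $h<1/n$ enters). Consequently $\Psi$ is zero-free on $[-1,1]$, and $g_s$ is smooth there with all derivatives bounded uniformly in $s\in[0,1]$ by compactness of $[0,1]\times[-1,1]$. Multiplying by a fixed cutoff $\eta\in C^\infty(\R)$ with $\eta\equiv1$ on $[-1,1]$ and $\supp\eta\subset(-2,2)$ yields $G_s=g_s\eta\in C_c^\infty(\R)$, uniformly bounded in $C^2$ for $s\in[0,1]$, with $G_s(kh)=\mu_k$ for all $|k|\le n$. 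The multiplier lemma (Lemma~\ref{lem3}) then provides $\sup_{h>0}\Vert\{G_s(kh)\}_k\Vert_{M_p}\le C(\b)$ for $1\le p\le\infty$; and since $\{G_s(kh)\}$ coincides with $\{\mu_k\}$ on the spectrum of $T_n$, applying this multiplier to the averaged difference reproduces $\D_\d^\b T_n$ and gives $\Vert\D_\d^\b T_n\Vert_p\le C(\b)\,\widetilde{\w}_\b(T_n,h)_p$.

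The main obstacle is precisely this uniform multiplier estimate: one must (i) exploit the cancellation above to treat $g_s$ as a smooth symbol rather than a ratio of functions singular at the origin, (ii) invoke $h<1/n$ to confine the nodes $kh$ to the interval $(-1,1)$ on which $\psi_\b$ is zero-free, and (iii) bound the derivatives of $g_s$ uniformly in $s\in(0,1)$. The genuinely delicate inputs are the uniform $C^2$ estimate for $g_s$ and the non-vanishing of $\psi_\b$ on $(0,1)$; everything else is the bookkeeping of the multiplier method already illustrated in the introduction.
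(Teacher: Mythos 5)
Your argument is correct and follows essentially the same route as the paper: the same symbol $(1-e^{\i\t t})^\b v(t)/\psi_\b(t)$ with a smooth cutoff, the non-vanishing of $\psi_\b$ on $0<|t|<\pi$ from Lemma~\ref{lem1}~$(iii)$ (which also covers the support $[-2,2]$ of your cutoff, not just $[-1,1]$), and the comparison principle of Lemma~\ref{lem3} combined with Beurling's condition (Lemma~\ref{lem4}) to get the uniform $B(\R)$ bound. Your explicit factorization $(1-e^{\i z})=(-\i z)\Phi(z)$ just spells out the removable singularity at the origin that the paper dismisses with ``it is easy to see that $g_\t\in C^\infty(\R)$.''
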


\begin{remark}
Theorem~\ref{th4} is also true in the case $0<p<1$. This follows from the proof of Theorem~\ref{th4} presented below and the corresponding results in~\cite{K07}.
\end{remark}

Another way to reduce the effect of~\eqref{02} is adding to  $\widetilde{\w}_\b(f,h)_p$ a quantity that has a better behaviour than the classical modulus of smoothness. For this purpose, one may use the error of the best approximation. As usual, the error of the best approximation of a function $f$ in $L_p$ by trigonometric polynomials of order at most $n$ is given by
$$
E_n(f)_p=\inf_{T\in \mathcal{T}_n}\Vert f-T\Vert_p.
$$

Recall the well-known Jackson inequality:
{\it for all $f\in L_p$, $0< p\le\infty$, $r\in \N$, and $n\in\N$ we have
\begin{equation}\label{eqI12}
  E_n(f)_p\le C\w_r\(f,\frac 1n\)_p,
\end{equation}
where $C$ is a constant independent of $f$ and $n$}
(see~\cite{St51} for $1\le p\le\infty$ and~\cite{SO} for $0<p<1$; see also~\cite{I},~\cite{Rup}, and \cite{SKO}).

\begin{theorem}\label{thbest}
      Let $f\in L_p(\T)$, $1\le p\le\infty$, $\b>0$, and $h\in (0,1)$. Then
\begin{equation*}
{\w}_\b(f,h)_p\asymp \widetilde{\w}_\b(f,h)_p+E_{[1/h]}(f)_p\,.
\end{equation*}
\end{theorem}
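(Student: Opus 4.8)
The plan is to establish the two one-sided estimates separately. The inequality $\widetilde{\w}_\b(f,h)_p+E_{[1/h]}(f)_p\le C\w_\b(f,h)_p$ is the routine half; the reverse bound $\w_\b(f,h)_p\le C\big(\widetilde{\w}_\b(f,h)_p+E_{[1/h]}(f)_p\big)$ is the substantive one, and I would reduce it to Theorem~\ref{th4} by splitting $f$ through a polynomial of near-best approximation.

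For the first inequality, the term $\widetilde{\w}_\b(f,h)_p$ is controlled by $\w_\b(f,h)_p$ directly from~\eqref{lemmm}. For the best-approximation term I would pass to an integer order: set $r=\lceil\b\rceil\in\N$ and $n=[1/h]$, and apply the Jackson inequality~\eqref{eqI12} to get $E_n(f)_p\le C\w_r(f,1/n)_p$. Since $r\ge\b$, the semigroup identity $\D_\d^r=\D_\d^{r-\b}\D_\d^\b$ together with $\sum_\nu|\binom{r-\b}{\nu}|<\infty$ yields the monotonicity in the order $\w_r(f,\cdot)_p\le C\w_\b(f,\cdot)_p$; combining this with the dilation property $\w_r(f,2h)_p\le C\w_r(f,h)_p$ and the elementary bound $1/[1/h]\le 2h$ (valid for all $h\in(0,1)$) gives $E_{[1/h]}(f)_p\le C\w_\b(f,h)_p$.

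For the reverse inequality, let $n=[1/h]$ and choose $T_n\in\mathcal{T}_n$ with $\Vert f-T_n\Vert_p\le 2E_n(f)_p$. By subadditivity, $\w_\b(f,h)_p\le\w_\b(f-T_n,h)_p+\w_\b(T_n,h)_p$. The first summand is at most $\big(\sum_\nu|\binom{\b}{\nu}|\big)\Vert f-T_n\Vert_p\le CE_n(f)_p$, the coefficient sum being finite because $\b>0$. For the second summand, the relation $h\le 1/n$ lets me invoke Theorem~\ref{th4}, giving $\w_\b(T_n,h)_p\le C\widetilde{\w}_\b(T_n,h)_p$; then $\widetilde{\w}_\b(T_n,h)_p\le\widetilde{\w}_\b(f,h)_p+\widetilde{\w}_\b(f-T_n,h)_p$, and Minkowski's integral inequality bounds the last term by $\big(\sum_\nu|\binom{\b}{\nu}|\big)\Vert f-T_n\Vert_p\le CE_n(f)_p$. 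Collecting the pieces produces $\w_\b(f,h)_p\le C\big(\widetilde{\w}_\b(f,h)_p+E_{[1/h]}(f)_p\big)$.

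The main obstacle is not computational but is already absorbed into Theorem~\ref{th4}, which carries the analytic weight of the equivalence on polynomials. The one genuine technical point is the endpoint: Theorem~\ref{th4} is stated for $h<1/n$, while $n=[1/h]$ only guarantees $h\le 1/n$, with equality precisely when $1/h\in\N$. I would dispose of this exceptional (and sparse) set of $h$ by a limiting argument, letting $h'\uparrow 1/n$ and using the continuity of $\w_\b(f,\cdot)_p$ and $\widetilde{\w}_\b(f,\cdot)_p$ in the step together with the fact that $[1/h]$ is locally constant there, or simply by noting that the proof of Theorem~\ref{th4} remains valid at $h=1/n$. Finally, I would verify that every constant depends only on $p$ and $\b$, hence is uniform over $h\in(0,1)$, which is immediate since each ingredient (Jackson, the dilation and order-monotonicity properties, Theorem~\ref{th4}, and Minkowski) contributes an $h$-independent constant.
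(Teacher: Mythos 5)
Your argument is correct and follows essentially the same route as the paper: decompose $f$ through a (near-)best approximant $T_n$ with $n=[1/h]$, control $\w_\b(f-T_n,h)_p$ and $\widetilde{\w}_\b(f-T_n,h)_p$ by $E_n(f)_p$ via the boundedness of $\D_\d^\b$, and invoke Theorem~\ref{th4} on the polynomial part. You are in fact somewhat more careful than the paper on two minor points --- the fractional Jackson bound $E_{[1/h]}(f)_p\le C\w_\b(f,h)_p$ needed for the easy direction, and the endpoint $h=1/n$ in Theorem~\ref{th4} --- both of which you resolve correctly.
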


Finally, let us consider a modification of~\eqref{eqmod3}.
Let $\b>\a>0$. Denote
\begin{equation*}
    \w_{\b;\, \a}^*(f,h)_p=\bigg\Vert\frac 1{h^2} \int_0^h
 \int_0^h \D_{\d_1}^{\b-\a} \D_{\d_2}^{\a} f(\cdot) {\rm d}\d_1
 {\rm d}\d_2\bigg\Vert_p.
\end{equation*}
It is easy to see that $\b$ is the main parameter in the above modulus of smoothness.

\begin{theorem}\label{th5} Let $f\in L_p(\T)$, $1\le p\le\infty$, and $\b>0$ and $\a \in (0,4]$ be such that $\b-\a\in \Z_+$. Then for all $h\in (0,1)$ we have
\begin{equation*}
  \w_{\b;\, \a}^*(f,h)_p\asymp \w_\b(f,h)_p\,.
\end{equation*}
\end{theorem}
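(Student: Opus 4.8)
The plan is to translate everything into Fourier multipliers. Since $\D_\d^\g e_n=(1-e^{\i n\d})^\g e_n$ and the double integral factorizes, the operator defining $\w_{\b;\,\a}^*$ acts on the $k$-th frequency by the symbol
$$
\Big(\tfrac1h\int_0^h(1-e^{\i k\d_1})^{\b-\a}\,{\rm d}\d_1\Big)\Big(\tfrac1h\int_0^h(1-e^{\i k\d_2})^{\a}\,{\rm d}\d_2\Big)=\psi_m(kh)\,\psi_\a(kh),
$$
where $m:=\b-\a\in\Z_+$ and I have used $\frac1h\int_0^h(1-e^{\i k\d})^\g\,{\rm d}\d=\psi_\g(kh)$. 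The problem is thus to compare the operator with symbol $\psi_m(kh)\psi_\a(kh)$ to the family $\{\D_\d^\b f\}_{0<\d<h}$ (symbols $(1-e^{\i k\d})^\b$), since $\w_\b(f,h)_p=\sup_{0<\d<h}\Vert\D_\d^\b f\Vert_p$. I shall use repeatedly the two asymptotics $\psi_\g(t)\sim c_\g(-\i t)^\g$ as $t\to0$ and, from $\psi_\g(t)=\frac1t\int_0^t(1-e^{\i\theta})^\g{\rm d}\theta$, $\psi_\g(t)=1+O(1/t)$ as $t\to\infty$.

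For the upper estimate $\w_{\b;\,\a}^*(f,h)_p\le C\w_\b(f,h)_p$ I would use the fractional $K$-functional $K_\b(f,h)_p=\inf_g\{\Vert f-g\Vert_p+h^\b\Vert g^{(\b)}\Vert_p\}$ together with its classical equivalence $K_\b(f,h)_p\asymp\w_\b(f,h)_p$. The operator is bounded on $L_p$ uniformly in $h$ (its symbol is bounded and, as $\sum_\nu|\binom\g\nu|<\infty$, each $\D_\d^\g$ is uniformly bounded, so Minkowski applies), whence $\w_{\b;\,\a}^*(f-g,h)_p\le C\Vert f-g\Vert_p$; applied to $g$ and compared with $g^{(\b)}$ (symbol $(\i k)^\b$), the inequality $\w_{\b;\,\a}^*(g,h)_p\le Ch^\b\Vert g^{(\b)}\Vert_p$ reduces to checking that $G(t):=\psi_m(t)\psi_\a(t)/(\i t)^\b$ generates a multiplier with $\sup_h\Vert\{G(kh)\}\Vert_{M_p}<\infty$. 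By the asymptotics $G$ is bounded, has a removable singularity at $0$, and decays like $|t|^{-\b}$ with an integrable derivative, so the multiplier criterion (of the type already used for $\widetilde{\w}_\b$) applies. Taking the infimum over $g$ gives the upper estimate.

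The reverse estimate $\w_\b(f,h)_p\le C\w_{\b;\,\a}^*(f,h)_p$ is the crux. By the multiplier transference of Lemma~\ref{lem3} it suffices to bound, uniformly in $0<\d<h<1$, the $M_p$-norm of the symbol quotient; with $t=kh$ and $u=\d/h\in(0,1)$ this is
$$
g_u(t)=\frac{(1-e^{\i tu})^\b}{\psi_m(t)\psi_\a(t)},
$$
and I must prove $\sup_{u\in(0,1)}\sup_{h>0}\Vert\{g_u(kh)\}\Vert_{M_p}<\infty$ for every $1\le p\le\infty$. I would split $g_u=\eta g_u+(1-\eta)g_u$ with a smooth cut-off $\eta\equiv1$ on $[-1,1]$, $\supp\eta\subset[-2,2]$. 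On $\supp\eta$ the common factor cancels: $\big(\frac{1-e^{\i tu}}{-\i t}\big)^\b$ is smooth and nonvanishing near $0$ and $(-\i t)^\b/(\psi_m\psi_\a)$ tends to a positive constant, so $\eta g_u$ is smooth with $C^2$-norm bounded uniformly in $u$, hence a uniform multiplier. On $|t|\ge1$ I would write $g_u=(1-e^{\i tu})^\b\cdot\frac1{\psi_m(t)\psi_\a(t)}$ and separate $\frac1{\psi_m\psi_\a}=1+R(t)$, giving
$$
(1-\eta)g_u=(1-\eta)(1-e^{\i tu})^\b+(1-e^{\i tu})^\b\,(1-\eta)R(t).
$$
The first summand differs from the symbol $(1-e^{\i k\d})^\b$ of $\D_\d^\b$ only by a smooth compactly supported term and is a uniform multiplier; the second is the product of the uniform multiplier $(1-e^{\i tu})^\b$ with the fixed function $(1-\eta)R$, and since products of multipliers are multipliers with the expected norm bound, it remains only to control $(1-\eta)R$ as a multiplier. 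Assembling the pieces yields $\sup_{0<\d<h}\Vert\D_\d^\b f\Vert_p\le C\w_{\b;\,\a}^*(f,h)_p$.

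The argument rests on one analytic fact, which I expect to be the main obstacle: $\psi_m(t)\psi_\a(t)$ must be bounded away from $0$ on $\{|t|\ge1\}$, i.e. $\psi_\a$ (as $\a\in(0,4]$) and $\psi_m$ (as $m\in\Z_+$, with $\psi_0\equiv1$) must have no real zeros other than $t=0$. This is exactly the zero-freeness of $\psi_\g$ for $\g\in(0,4]\cup\N$ underlying Theorem~\ref{th3}$(i)$ — the threshold $\b_0\in(4,5)$ being where the first real zero appears — and it is what turns the restriction $\a\le4$ into the quarantine of the pathology~\eqref{02}. Granting the non-vanishing, the lower bound $|\psi_m(t)\psi_\a(t)|\ge c_0>0$ on $|t|\ge1$ follows together with $\psi_\g(t)\to1$, and makes $R=1/(\psi_m\psi_\a)-1$ smooth there; the delicate point that remains is the uniform multiplier control of $(1-\eta)R$ at the endpoints $p\in\{1,\infty\}$, where one must exploit the precise decay $R(t)=O(1/t)$ (with $R'(t)=O(1/t^2)$) of $\psi_\g-1$ rather than mere boundedness. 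The low-frequency $C^2$-estimates are then routine.
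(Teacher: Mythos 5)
Your strategy is sound, and on the harder direction it essentially reproduces the paper's argument: you identify the symbol $\psi_{\b-\a}(kh)\psi_\a(kh)$, isolate as the crucial input the non-vanishing of $\psi_\a$ on $\R\setminus\{0\}$ for $\a\le 4<\b_0$ (Lemma~\ref{lem1}) and of $\psi_{\b-\a}$ for $\b-\a\in\Z_+$ (Lemma~\ref{lemtran}), and split the quotient into a low-frequency part (smooth, with a genuinely removable singularity at the origin because numerator and denominator both behave like $(-\i t)^\b$ there) and a high-frequency part, where you write $1/(\psi_{\b-\a}\psi_\a)=1+R$ and control $R$ through its decay. The paper does the same with the de la Vall\'ee-Poussin cut-off $v$, except that for the high-frequency part it invokes the Wiener--L\'evy theorem to place $(1-v)^{1/2}/\psi_\a$ and $(1-v)^{1/2}/\psi_{\b-\a}$ in $B(\R)$ rather than using your explicit $1+R$ decomposition; both work, and your route needs only Beurling's condition (Lemma~\ref{lem4}), though note that $R'(t)=O(1/t)$ rather than $O(1/t^2)$ (indeed $\psi_\g'(t)=((1-e^{\i t})^\g-\psi_\g(t))/t$ and the first term in the numerator oscillates without decaying) --- this is still square-integrable at infinity, so Beurling applies. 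For the upper estimate the paper takes a more elementary path: best approximation $T_n$, Jackson's inequality, and the Boas-type Lemma~\ref{lemNS} applied twice to get $\Vert\D_{\d_1}^{\b-\a}\D_{\d_2}^{\a}T_n\Vert_p\le C\Vert\D_h^{\b}T_n\Vert_p$; your $K$-functional route is a legitimate alternative but imports the fractional equivalence $K_\b(f,h^\b)_p\asymp\w_\b(f,h)_p$ for $1\le p\le\infty$, which the paper never needs.

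One slip you should repair in that upper estimate: the function $G(t)=\psi_{\b-\a}(t)\psi_\a(t)/(\i t)^\b$ does \emph{not} have a removable singularity at $0$ when $\b\notin\Z$. Since $\psi_\g(t)\sim(-\i t)^\g/(\g+1)$, the one-sided limits of $G$ at the origin are constant multiples of $e^{-\i\pi\b}$ and $e^{+\i\pi\b}$ respectively; a jump at the origin is, up to bounded terms, the conjugate-function multiplier, which is unbounded on $L_1$ and $L_\infty$, so $G\notin B(\R)$ as written and the transference fails exactly at the endpoint values of $p$ you must cover. The fix is to normalize the fractional derivative with the symbol $(-\i k)^\b$, the natural normalization attached to the forward difference $(1-e^{\i k\d})^\b$; with that choice $G$ is continuous at $0$ with equal one-sided limits and the rest of your multiplier analysis goes through. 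At $p\in\{1,\infty\}$ the two normalizations of the fractional derivative are genuinely inequivalent, so this is not merely cosmetic.
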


\section{Auxiliary results}

\setcounter{section}{3}
\setcounter{equation}{0}\setcounter{theorem}{0}

\subsection{Properties of the differences and moduli of smoothness.} Let us recall several basic properties of the differences and moduli of smoothness of fractional order (see~\cite{ But, DL, samko,  tabeR}).

For  $f,f_1,f_2\in L_p(\T)$, $0< p\le \infty$, $h>0$, and $\a,\b \in\N\cup \(1/{p_1}-1,\infty\)$, we have
\begin{itemize}
\item[{$(a)$}]
 $\D_h^\b(f_1+f_2)(x) = \D_h^{\b} f_1(x) + \D_h^{\b} f_2(x)$;

\item[{$(b)$}]
 $\D_h^\alpha(\D_h^\beta f)(x) = \D_h^{\alpha+\beta} f(x)$;

\item[{$(c)$}]  $\|\D_h^{\b} f\|_{p} \le C(\b,p)\|f\|_{p}$;
\end{itemize}

\begin{itemize}
\item[${(d)}$]
$ \omega_\beta(f,\delta)_p$ is a non-negative non-decreasing function of $\delta$ such that
$\lim_{\delta\to 0+} \omega_\beta(f,\delta)_p=0;$
\item[{$(e)$}]
$              \omega_\beta(f_1+f_2,\delta)_p\le 2^{\frac1{p_1}-1}
\big(     \omega_\beta(f_1,\delta)_p+\omega_\beta(f_2,\delta)_p\big);
$
 \item[{$(f)$}]
$              \omega_{\alpha+\beta}(f,\delta)_p\le
C(\alpha,p) \omega_{\beta}(f,\delta)_p$;
\item[{$(g)$}]
for $\lambda\ge 1$ and $\b\in\N$,
        $$
        \omega_{\beta}(f,\lambda \delta)_p\le C(\beta,p) \lambda^{\beta+\frac1{p_1}-1}  \omega_{\beta}(f,\delta)_p.
        $$
\end{itemize}

We will  use the following Boas type inequality.

\begin{lemma}\label{lemNS}
Let $0<p\le\infty$, $\beta>0$, $n\in\mathbb{N}$, and
$0<\delta, h\le {\pi}/{n}$. Then for each
$T_n\in\mathcal{T}_n$ we have
\begin{equation*}
h^{-\beta}\Vert\Delta_h^\beta T_n\Vert_p\asymp\delta^{-\beta}\Vert\Delta_\delta^\beta T_n\Vert_p,
\end{equation*}
where $\asymp$ is a two-sided inequality with absolute constants independent of $T_n$, $h$, and $\d$.
\end{lemma}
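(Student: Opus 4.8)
The plan is to read the two quantities as images of one another under a Fourier multiplier operator acting on $\mathcal{T}_n$, and to bound the corresponding multiplier norm uniformly in $h,\d$ and $n$. Since $\D_\d^\b$ acts on $e^{\i k x}$ by multiplication by $(1-e^{\i k\d})^\b$, for $T_n=\sum_{|k|\le n}\widehat{T}_k e^{\i k x}$ the polynomial $h^{-\b}\D_h^\b T_n$ is obtained from $\d^{-\b}\D_\d^\b T_n$ by the multiplier
\[
\l_k=\frac{h^{-\b}(1-e^{\i kh})^\b}{\d^{-\b}(1-e^{\i k\d})^\b},\qquad |k|\le n.
\]
Because $h,\d\le \pi/n$, all arguments $kh,k\d$ lie in $[-\pi,\pi]$, which is precisely the range in which the relevant branches are under control. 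As the roles of $h$ and $\d$ are symmetric, it suffices to bound $\{\l_k\}$ by a constant independent of $h,\d$, and then exchange $h\leftrightarrow\d$ to obtain the reverse inequality.

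The key step is to split $\l_k$ into a harmless phase and a genuinely smooth positive magnitude. Using the identity $1-e^{\i t}=-2\i\,\sin(t/2)\,e^{\i t/2}$ together with the bound $|k(h-\d)/2|<\pi/2$ (so that no branch cut is crossed when forming the quotient and its $\b$-th power), one obtains
\[
\l_k=\frac{G(kh)}{G(k\d)}\,e^{\i\b k(h-\d)/2},\qquad G(u)=\left(\frac{2\sin(u/2)}{u}\right)^{\b},
\]
where $G$ is real, strictly positive, and real-analytic on $[-\pi,\pi]$ (with $G(0)=1$), and the same holds for $1/G$. The factor $\{e^{\i\b k(h-\d)/2}\}_k$ is the symbol of the shift $f(\cdot)\mapsto f(\cdot+\b(h-\d)/2)$, which is an isometry on every $L_p(\T)$, $0<p\le\infty$, and so contributes factor $1$ and may be discarded. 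Since composing multiplier operators multiplies their symbols, and operator (quasi-)norms are submultiplicative even for $0<p<1$, it remains to bound, uniformly in $h,\d\le\pi/n$ and $n$, the two polynomial multipliers $\{G(kh)\}_{|k|\le n}$ and $\{1/G(k\d)\}_{|k|\le n}$ acting on $\mathcal{T}_n$.

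These bounds reduce to standard multiplier estimates for the fixed smooth functions $G$ and $1/G$. For $1\le p\le\infty$ the sequence $\{G(kh)\}$ is the symbol of convolution with a kernel whose $L_1(\T)$ norm is controlled, via Poisson summation, by $\|\widehat{\widetilde G}\|_{L_1(\R)}$ for any $\widetilde G\in C_c^\infty(\R)$ agreeing with $G$ on $[-\pi,\pi]$; this is independent of $h$, and Young's inequality gives a uniform $L_p\to L_p$ bound (in fact independent of $p$ in this range). For $0<p<1$ the $L_1$-kernel argument is unavailable, and one instead invokes the multiplier theorem for trigonometric polynomials requiring control of derivatives up to order $\lfloor 1/p\rfloor+1$: applying it to a compactly supported smooth extension of $v\mapsto G(nh\cdot v)$ from $[-1,1]$, and using $nh\le\pi$ to estimate its $j$-th derivative by $(nh)^{j}\sup_{[-\pi,\pi]}|G^{(j)}|\le\pi^{j}\sup_{[-\pi,\pi]}|G^{(j)}|$, yields the desired uniform bound. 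The same reasoning applies verbatim to $1/G$.

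Combining the three estimates gives $\|\{\l_k\}\|_{L_p\to L_p}\le C$ with $C$ depending only on $\b$ (and on $p$ when $p<1$), whence $h^{-\b}\Vert\D_h^\b T_n\Vert_p\le C\,\d^{-\b}\Vert\D_\d^\b T_n\Vert_p$; exchanging $h$ and $\d$ produces the matching lower bound and hence the claimed equivalence with constants independent of $T_n$, $h$, and $\d$. I expect the main obstacle to be the range $0<p<1$: there the soft argument through $L_1$-kernels and Young's inequality breaks down, and the uniform boundedness of the magnitude multiplier must be extracted from the quantitative polynomial multiplier theorem, which is the only place where genuine smoothness estimates on $G$ (derivatives up to order $\lfloor 1/p\rfloor+1$) are needed. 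The separation of phase from magnitude is what makes everything else routine, since it replaces the branch-dependent symbol $(1-e^{\i kh})^\b$ by the manifestly positive, branch-free function $G$.
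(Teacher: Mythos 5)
Your proof is correct, but be aware that the paper contains no proof of this lemma to compare against: it is imported from the literature (Timan and Taberski for $1\le p\le\infty$, Ditzian--Hristov--Ivanov and Kolomoitsev's 2007 paper for $0<p<1$), and your multiplier argument is essentially the one used in those references. The decisive step, which you identify correctly, is the factorization $1-e^{\i t}=2\sin(t/2)\,e^{\i (t-\pi)/2}$ for $0<t\le\pi$: since $kh,k\d\in[-\pi,\pi]$, it splits the symbol $h^{-\b}(1-e^{\i kh})^\b\big/\big(\d^{-\b}(1-e^{\i k\d})^\b\big)$ into the unimodular translation symbol $e^{\i\b k(h-\d)/2}$ (an $L_p$-isometry for all $0<p\le\infty$) times $G(kh)/G(k\d)$ with $G(u)=(2\sin(u/2)/u)^\b$ positive, even, and analytic on $[-\pi,\pi]$; this removes the branch difficulty entirely and reduces everything to samples of a fixed smooth function, so the $B(\R)$ bound for $1\le p\le\infty$ and the polynomial multiplier theorem with $\lfloor 1/p\rfloor+1$ derivatives for $0<p<1$ apply uniformly in $h$, $\d$, and $n$. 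Two details you gloss over but which do not affect validity: the symbol at $k=0$ is $0/0$ and should simply be set to $1$ (the difference annihilates constants); and when $nh$ is small the function $v\mapsto G(nhv)$ is not supported in a fixed interval, so before invoking the polynomial multiplier theorem for $0<p<1$ you should multiply by a fixed cutoff equal to $1$ on $[-1,1]$ — harmless, since only the values at $v=k/n$ with $|k|\le n$ matter. The only external ingredient left unproved is that polynomial multiplier theorem itself, which is a legitimate citation given that the paper likewise outsources this case.
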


In the case $1\le p \le\infty$, Lemma~\ref{lemNS} follows from~\cite[4.8.6 and 4.12.18]{timan} (for integer $\beta$), and~\cite{tabeR} (for any positive $\beta$).
In the case $0<p<1$, it follows from~\cite{DHI} (for integer $\beta$) and from~\cite{K07} (for any positive $\beta$).

\subsection{Properties of the function $\psi_\b(t)$.}

Everywhere below we set
$$
z_\b(t)=t\psi_\b(t)=\int_0^t (1-e^{\i\vp})^\b {\rm d}\vp.
$$

The next lemma is a key result for proving Theorem~\ref{th3} $(i)$.

\begin{lemma}\label{lem0} {\sc (See~\cite{Tumb}).}
Let $\b>0$ and $\psi_\b(t)\neq 0$ for $t\in\R\setminus \{0\}$.
Then for each function $f\in L_p(\T)$, $1\le p\le\infty$, and $h>0$ we have
$$
{\w}_\b(f,h)_p\le  C(\b)\widetilde{{\w}}_\b(f,h)_p.
$$
\end{lemma}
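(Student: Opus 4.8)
The plan is to realize the passage from the averaged difference back to the modulus $\w_\b(f,h)_p$ as a Fourier-multiplier problem, exactly as the method described in the introduction around formula~\eqref{mainf0} suggests. Recall that the Fourier series of $\frac1h\int_0^h\D_\d^\b f(x)\,{\rm d}\d$ is $\sum_k \psi_\b(kh)\widehat f_k e^{\i kx}$, while $\D_h^\b f$ has Fourier series $\sum_k (1-e^{\i kh})^\b \widehat f_k e^{\i kx}$. So to estimate $\Vert\D_h^\b f\Vert_p$ by $\widetilde{\w}_\b(f,h)_p$ it suffices, by the definition of the multiplier norm, to show that the sequence
$$
\l_k(h)=\frac{(1-e^{\i kh})^\b}{\psi_\b(kh)}
$$
is a Fourier multiplier in $L_p(\T)$ with norm bounded uniformly in $h>0$; indeed, applying $\Lambda$ with symbol $\{\l_k(h)\}$ to the function with series $\sum_k\psi_\b(kh)\widehat f_k e^{\i kx}$ (whose $L_p$-norm is $\widetilde{\w}_\b(f,h)_p$) recovers $\D_h^\b f$, giving $\Vert\D_h^\b f\Vert_p\le C\,\widetilde{\w}_\b(f,h)_p$, and then taking the supremum over $0<\d<h$ (after rescaling) yields the claimed bound on $\w_\b(f,h)_p$.

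First I would reduce to a single fixed profile by exploiting homogeneity. Writing $t=kh$ and setting
$$
\mu(t)=\frac{(1-e^{\i t})^\b}{\psi_\b(t)},
$$
the sequence $\{\l_k(h)\}$ is just $\{\mu(kh)\}$, i.e. the dilation by $h$ of a single fixed function $\mu$ sampled at the integers. The standard transference/discretization results for Fourier multipliers (of the type collected in~\cite[Ch.~VI]{SW}) reduce the uniform-in-$h$ bound on $\Vert\{\mu(kh)\}\Vert_{M_p}$ to a uniform bound on the variational/smoothness norm of $\mu$ itself; for $1\le p\le\infty$ the relevant sufficient condition is of Marcinkiewicz/bounded-variation type, namely that $\mu$ is bounded and $\int_\R |\mu'(t)|\,{\rm d}t<\infty$ (equivalently that $\mu$ has uniformly bounded variation on dyadic blocks). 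So the whole lemma comes down to verifying two analytic facts about the scalar function $\mu(t)$: that it is bounded on $\R$, and that it has finite total variation (finite $\int|\mu'|$).

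The boundedness and the variation estimate are where the real work lies, and this is the step I expect to be the main obstacle. Near $t=0$ both numerator and denominator vanish: $(1-e^{\i t})^\b\sim(-\i t)^\b$ and $z_\b(t)=t\psi_\b(t)=\int_0^t(1-e^{\i\vp})^\b\,{\rm d}\vp\sim \frac{(-\i t)^{\b+1}}{\b+1}$, so $\psi_\b(t)\sim\frac{(-\i t)^\b}{\b+1}$ and hence $\mu(t)\to \b+1$ as $t\to0$; thus the apparent singularity at the origin is removable and $\mu$ extends continuously there. Away from the origin the hypothesis $\psi_\b(t)\ne0$ for $t\in\R\setminus\{0\}$ guarantees the denominator does not vanish, so $\mu$ is continuous on all of $\R$. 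For the behaviour as $|t|\to\infty$ I would note that $|1-e^{\i t}|\le 2$ keeps the numerator bounded, and I would obtain a lower bound $|\psi_\b(t)|\ge c>0$ for large $|t|$ by analysing $z_\b(t)$; integrating $(1-e^{\i\vp})^\b$ and separating the slowly varying part gives $z_\b(t)=c_\b t+O(1)$ with $c_\b=\frac1{2\pi}\int_0^{2\pi}(1-e^{\i\vp})^\b\,{\rm d}\vp\ne0$ for the relevant $\b$, whence $\psi_\b(t)=c_\b+O(1/t)$ stays bounded away from $0$. This simultaneously yields boundedness of $\mu$ and, after differentiating and controlling $\mu'(t)=\frac{\b(1-e^{\i t})^{\b-1}(-\i e^{\i t})\psi_\b(t)-(1-e^{\i t})^\b\psi_\b'(t)}{\psi_\b(t)^2}$ both near $0$ (using the local expansions to check $\mu'$ stays bounded) and near $\infty$ (where $\mu'(t)=O(1/t^2)$ is integrable), the finiteness of $\int_\R|\mu'|$. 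The delicate point is confirming, under the standing hypothesis $\psi_\b\ne0$ off the origin, that $|\psi_\b|$ stays bounded below uniformly on compact sets away from $0$ and at infinity, since this is what prevents $\mu$ and $\mu'$ from blowing up; this is precisely the role of the non-vanishing assumption, and assembling these local and global estimates into a single Marcinkiewicz-type bound is the heart of the argument.
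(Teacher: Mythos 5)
The paper never proves Lemma~\ref{lem0} --- it is imported from~\cite{Tumb} --- so the fair comparison is with the analogous multiplier arguments the paper does carry out (the proofs of Theorems~\ref{th4} and~\ref{th5}). Your overall strategy is the right one and matches that machinery: reduce everything to a uniform multiplier bound for the symbol $(1-e^{\i k\d})^\b/\psi_\b(kh)$. But the step you yourself flag as ``the heart of the argument'' is carried out with a sufficient condition that is both false for your function $\mu$ and too weak for the stated range of $p$. Since $\psi_\b(t)\to 1$ as $|t|\to\infty$, for large $|t|$ one has $\mu(t)\approx(1-e^{\i t})^\b$, which oscillates at unit scale; hence $\mu'(t)$ is \emph{not} $O(1/t^2)$ but merely bounded, $\int_\R|\mu'|=\infty$, and even the dyadic Marcinkiewicz condition fails because the variation of $\mu$ over $[T,2T]$ grows like $T$. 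Moreover a Marcinkiewicz-type theorem would only cover $1<p<\infty$, while the lemma includes $p=1,\infty$. The workable route --- the one the paper uses for $g_{2,\t}$ in the proof of Theorem~\ref{th5} --- is to stay in the Wiener algebra $B(\R)$ and exploit its multiplicative structure: split $\mu=\mu v+\mu(1-v)$ with a smooth cutoff $v$ near the origin; the first piece is smooth and compactly supported (your expansion showing $\mu(0+)=\b+1$ is exactly the needed computation) and lies in $B(\R)$ by Beurling's condition (Lemma~\ref{lem4}); the second factors as $(1-e^{\i\t t})^\b\cdot(1-v(t))/\psi_\b(t)$, where $(1-e^{\i\t t})^\b=\sum_\nu\binom{\b}{\nu}(-1)^\nu e^{\i\nu\t t}$ has $B$-norm at most $2^\b$ and $(1-v)/\psi_\b\in B(\R)$ by the Wiener--L\'evy theorem, using precisely the hypotheses $\psi_\b\neq 0$ off the origin and $\psi_\b\to 1$. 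A total-variation criterion cannot be repaired here; the algebra argument is essential.

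A second, smaller gap: the reduction to $\d=h$ ``after rescaling'' is not legitimate. Rescaling would give $\Vert\D_\d^\b f\Vert_p\le C\,\widetilde{\w}_\b(f,\d)_p$, and $\widetilde{\w}_\b(f,\cdot)_p$ is not monotone in the step --- Theorem~\ref{th3}$(ii)$ shows it can even vanish at isolated scales --- so you cannot pass from $\d$ to $h$ on the right-hand side. You must treat the genuinely two-parameter symbol $(1-e^{\i k\d})^\b/\psi_\b(kh)=g_\t(kh)$ with $\t=\d/h\in(0,1]$ and prove $\sup_{\t}\Vert g_\t\Vert_B<\infty$, exactly as in the paper's proof of Theorem~\ref{th4}; fortunately the decomposition above is uniform in $\t$.
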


The following result was proved in~\cite[8.3.5 b)]{TB} by using Lindemann's classical theorem about the transcendence
of values of the exponential function.

\begin{lemma}\label{lemtran}
  If $\b\in\N$, then $\psi_\b(t)\neq 0$ for all $t\in \R\setminus \{0\}$.
\end{lemma}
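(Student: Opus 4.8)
The plan is to pass from $\psi_\b$ to the antiderivative $z_\b(t)=t\psi_\b(t)=\int_0^t(1-e^{\i\vp})^\b\,{\rm d}\vp$ introduced above; since we only care about $t\neq 0$, it is equivalent to show $z_\b(t)\neq 0$ for every real $t\neq 0$. Because $\b\in\N$, the integrand is a genuine trigonometric polynomial, so I would first expand $(1-e^{\i\vp})^\b=\sum_{\nu=0}^\b\binom{\b}{\nu}(-1)^\nu e^{\i\nu\vp}$ and integrate termwise to obtain the closed form
$$
z_\b(t)=t-\i\sum_{\nu=1}^\b q_\nu\bigl(e^{\i\nu t}-1\bigr),\qquad q_\nu=\frac{(-1)^\nu}{\nu}\binom{\b}{\nu}\in\mathbb{Q}\setminus\{0\}.
$$
The point of this normalization is that the equation $z_\b(t_0)=0$ becomes a linear relation among $1,e^{\i t_0},\dots,e^{\i\b t_0}$ with \emph{algebraic} coefficients, except that the coefficient of $1$ carries the anomalous term $t_0$. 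The overall strategy is therefore to argue by contradiction: assuming $z_\b(t_0)=0$ for some real $t_0\neq0$, reduce to the case where $t_0$ (equivalently $\i t_0$) is an algebraic number, and then invoke Lindemann's theorem, which says that $e^{\i t_0}$ must be transcendental whenever $\i t_0$ is algebraic and nonzero.

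The decisive reduction comes from examining the imaginary part alone. Since the $q_\nu$ are real, the displayed formula gives $\Im z_\b(t)=\sum_{\nu=1}^\b q_\nu(1-\cos\nu t)$ and $\Re z_\b(t)=t+\sum_{\nu=1}^\b q_\nu\sin\nu t$. Using the Chebyshev polynomials $T_\nu$ (with integer coefficients) to write $\cos\nu t=T_\nu(\cos t)$, I would express $\Im z_\b(t)=g(\cos t)$, where $g(x)=\sum_{\nu=1}^\b q_\nu\bigl(1-T_\nu(x)\bigr)\in\mathbb{Q}[x]$. The top-degree contribution comes solely from $\nu=\b$, namely $-q_\b T_\b(x)$, whose leading coefficient is $-q_\b\,2^{\b-1}\neq0$; hence $g$ has exact degree $\b\ge1$ and, in particular, $g\not\equiv0$. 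Consequently $g$ has only finitely many roots, all of them \emph{algebraic}. Now if $z_\b(t_0)=0$ then $\Im z_\b(t_0)=0$, so $g(\cos t_0)=0$, forcing $\cos t_0$ to be algebraic; then $\sin t_0=\pm\sqrt{1-\cos^2 t_0}$ is algebraic as well, and therefore $e^{\i t_0}=\cos t_0+\i\sin t_0$ is an algebraic number.

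With $e^{\i t_0}$ algebraic, the closed form for $z_\b$ exhibits $t_0=\i\sum_{\nu=1}^\b q_\nu\bigl(e^{\i\nu t_0}-1\bigr)$ as a polynomial expression in $e^{\i t_0}$ with algebraic coefficients, so $t_0$ itself is algebraic, and $t_0\neq0$. Then $\i t_0$ is algebraic and nonzero, and Lindemann's theorem forces $e^{\i t_0}$ to be transcendental, contradicting the conclusion of the previous paragraph. This completes the argument, since $\psi_\b(t)=z_\b(t)/t\neq0$ for all real $t\neq0$. I expect the main obstacle to be precisely the bridge between the two paragraphs: Lindemann applies only to algebraic arguments, so a purely transcendental $t_0$ cannot be excluded directly, and one cannot rely on a sign argument for $\Im z_\b$ (for $\b\ge3$ the polynomial $g$ genuinely changes sign, e.g.\ $\Im z_3(t)=\tfrac13(4\cos t-1)(1-\cos t)^2$). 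The Chebyshev observation is what circumvents this, by showing that any real zero of the imaginary part automatically has algebraic $\cos t_0$, thereby confining all candidate zeros to the regime where Lindemann is available.
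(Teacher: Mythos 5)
Your proof is correct: the Chebyshev-polynomial step shows that any zero $t_0\neq 0$ of $\Im z_\b$ forces $\cos t_0$, hence $e^{\i t_0}$, to be algebraic, after which the full relation $z_\b(t_0)=0$ makes $t_0$ itself algebraic and Lindemann's theorem supplies the contradiction. The paper does not prove this lemma but only cites \cite[8.3.5 b)]{TB}, remarking that the argument there rests on Lindemann's classical theorem on the transcendence of values of the exponential function; your write-up follows that same strategy and fills in the details, in particular the algebraicity reduction that makes Lindemann applicable.
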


Thus, combining Lemma~\ref{lemtran} and Lemma~\ref{lem0}, we get the proof of Theorem~\ref{th3} $(i)$ for $\b\in \N$.
Below, we obtain some unexpected properties of $\psi_\b$ for non-integer~$\b$.

The following lemma is the main auxiliary result for proving Theorem~\ref{th3} in the case $\b\not\in \N$.

\begin{lemma}\label{lem1}
There exists a set $\{\b_k\}_{k=0}^\infty$ such that $\b_0\in
(4,5)$, $\b_k\to\infty$ as $k\to\infty$, and
\begin{itemize}
  \item[$(i)$] for all $\b\in (0,\b_0)$ we have
\begin{equation}\label{eq0.lem1}
    z_\b(t)\neq 0\quad\text{for}\quad t\in \R\setminus \{0\};
\end{equation}
  \item[$(ii)$] for each $k\in \Z_+$ there exists $t_k>2\pi$ such that
  $z_{\b_k}(t_k)=0$;
   \item[$(iii)$] for all $\b\in (0,\infty)$ we have
\begin{equation*}
    z_\b(t)\neq 0\quad\text{for}\quad 0<|t|<\pi.
\end{equation*}
\end{itemize}
\end{lemma}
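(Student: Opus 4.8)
My plan is to study the planar curve $t\mapsto z_\b(t)$ and to reduce all three assertions to the behaviour of its real-axis crossings over one period. First I would record the representation $(1-e^{\i\vp})^\b=(2\sin(\vp/2))^\b e^{\i\b(\vp-\pi)/2}$ for $\vp\in(0,2\pi)$ (the argument lies in $(-\pi/2,\pi/2)$, so the principal branch applies), and observe that the integrand depends on $\vp$ only through $e^{\i\vp}$, hence is $2\pi$-periodic. Integrating the binomial series $\sum_\nu\binom{\b}{\nu}(-1)^\nu e^{\i\nu\vp}$ term by term over $(0,2\pi)$ gives $z_\b(2\pi)=2\pi$, and periodicity then yields the drift relation $z_\b(t+2\pi)=z_\b(t)+2\pi$. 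Combined with the conjugation symmetry $z_\b(-t)=-\overline{z_\b(t)}$ and the reflection symmetry $z_\b(2\pi-t)=2\pi-\overline{z_\b(t)}$, these identities reduce everything to $t\in(0,\pi]$ and give the working criterion: $z_\b$ has a zero at some $t\ne0$ if and only if there is a crossing $r\in(0,\pi)$, i.e. $\Im z_\b(r)=0$, with $\Re z_\b(r)\in 2\pi\Z$; in particular $\Re z_\b(r)=-2\pi m$ with $m\ge1$ produces a zero at $t=r+2\pi m>2\pi$.

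For $(iii)$ I would prove the sharper statement $z_\b(t)\ne0$ for $t\in(0,\pi)$ by integration by parts. Writing $z_\b(t)=e^{-\i\b\pi/2}\int_0^t A(\vp)e^{\i\b\vp/2}\,\mathrm d\vp$ with $A(\vp)=(2\sin(\vp/2))^\b$, and using that $A$ is strictly increasing on $(0,\pi)$ with $A(0)=0$, one gets
\begin{equation*}
z_\b(t)=\frac{2}{\i\b}\,e^{-\i\b\pi/2}\(A(t)e^{\i\b t/2}-\int_0^t A'(\vp)e^{\i\b\vp/2}\,\mathrm d\vp\).
\end{equation*}
Since $A'>0$ on $(0,t)$ and the exponential is non-constant there, the strict triangle inequality gives $\bigl|\int_0^t A'(\vp)e^{\i\b\vp/2}\,\mathrm d\vp\bigr|<\int_0^t A'(\vp)\,\mathrm d\vp=A(t)=\bigl|A(t)e^{\i\b t/2}\bigr|$, so the bracket is nonzero; the conjugation symmetry then covers $t\in(-\pi,0)$ as well. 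This step I expect to be routine.

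For $(i)$ I would set $\b_0=\inf\{\b>0:\ z_\b(t)=0 \text{ for some } t\ne0\}$, so that non-vanishing on $(0,\b_0)$ is automatic and the whole content is the location $\b_0\in(4,5)$. The lower bound $\b_0>4$ amounts, via the criterion and $(iii)$, to showing that for every $\b\le4$ no crossing $r\in(0,\pi)$ has $\Re z_\b(r)\in 2\pi\Z$; here the sign chart of $\frac{\mathrm d}{\mathrm d\vp}\Im z_\b(\vp)=A(\vp)\sin(\b(\vp-\pi)/2)$ shows that $\Im z_\b$ has no interior crossing for $\b\le2$ and only a controlled finite number afterwards, so the task becomes a uniform estimate keeping $\Re z_\b(r)=\int_0^r A(\vp)\cos(\b(\vp-\pi)/2)\,\mathrm d\vp$ strictly inside $(-2\pi,2\pi)$ at each crossing. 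The upper bound $\b_0<5$ I would obtain by an explicit estimate of this oscillatory integral exhibiting one $\b\in(4,5)$ and one crossing with $\Re z_\b(r)=-2\pi$; continuity in $\b$ and the intermediate value theorem then place $\b_0$ in $(4,5)$, and compactness shows the infimum is attained, making $\b_0$ the first member of the sequence.

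For $(ii)$ I would show that the crossing real parts are unbounded below as $\b\to\infty$: near $\vp=\pi$ the amplitude $A$ peaks at $2^\b$ while the phase rotates at rate $\b/2$, so the curve executes a loop of radius of order $2^\b/\b$, forcing $\mu(\b):=\min\{\Re z_\b(r):\ r\in(0,\pi),\ \Im z_\b(r)=0\}\to-\infty$. Since $\mu$ is continuous in $\b$ and tends to $-\infty$, the intermediate value theorem yields an increasing sequence $\b_k\to\infty$ and crossings $r_k$ with $\Re z_{\b_k}(r_k)=-2\pi m_k$, $m_k\ge1$; by the criterion this gives $z_{\b_k}(t_k)=0$ with $t_k=r_k+2\pi m_k>2\pi$, which is $(ii)$, and the smallest such $\b$ is the $\b_0$ of part $(i)$. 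The hard part will be the quantitative analysis in $(i)$: confining $\b_0$ to $(4,5)$ demands sharp, essentially numerical, control of the integrals $\Re z_\b(r)$ at the crossings, together with careful bookkeeping of how crossings are born as $\b$ passes the even integers, where the sign chart of $\frac{\mathrm d}{\mathrm d\vp}\Im z_\b$ changes; establishing continuity of $\mu$ across these births is the other point requiring care.
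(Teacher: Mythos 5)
Your architecture coincides with the paper's almost step for step: the polar representation $(1-e^{\i\vp})^\b=(2\sin(\vp/2))^\b e^{\i\b(\vp-\pi)/2}$, the drift relation $z_\b(t+2\pi)=z_\b(t)+2\pi$, the two symmetries, the reduction to the sign pattern of $x_\b'(t)=\cos(\b(t-\pi)/2)(2\sin(t/2))^\b$ and $y_\b'(t)=\sin(\b(t-\pi)/2)(2\sin(t/2))^\b$ on one period, and an intermediate-value argument in $\b$ both to locate $\b_0$ and to generate the sequence $\b_k$. Your crossing criterion is correct and is implicitly what the paper uses; just note that a crossing $r\in(0,\pi)$ with $\Re z_\b(r)=+2\pi$ produces a zero at $2\pi-r\in(\pi,2\pi)$, so such crossings must also be excluded for $(i)$, and for $(ii)$ you must specifically target the values $-2\pi m$, $m\ge 1$, to guarantee $t_k>2\pi$, as you do. The one genuinely different piece is $(iii)$: your integration by parts, $z_\b(t)=\frac{2}{\i\b}e^{-\i\b\pi/2}\big(A(t)e^{\i\b t/2}-\int_0^t A'(\vp)e^{\i\b\vp/2}\,{\rm d}\vp\big)$ with $A=(2\sin(\cdot/2))^\b$ strictly increasing on $(0,\pi)$, combined with the strict triangle inequality, is complete, works uniformly for all $\b>0$, and is cleaner than the paper's argument, which assumes a zero and derives a contradiction by splitting the phase integral into $2\pi$-periods and checking each piece is negative.

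The gap is that the quantitative core of $(i)$ and $(ii)$ — which is where essentially all of the paper's effort goes — is declared rather than executed. For $\b_0>4$ you must actually bound $x_\b$ at the crossings of $y_\b$, i.e.\ verify for $\b\le 4$ that quantities such as $x_\b(\pi(1-1/\b))$ stay strictly above $-2\pi$; the paper does this by an interval-by-interval monotonicity analysis on $I_1,\dots,I_4$ together with explicit numerical bounds on pieces of the oscillatory integral (the estimates of $S_1,S_2,S_3$ and of $x_\b(\tau_1)$). For $\b_0<5$ one needs a concrete sign change, which the paper gets from numerical evaluation of $z_5$ at $13\pi/5$, $27\pi/10$, $14\pi/5$. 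None of these computations is replaceable by a soft argument, so until they are done the statement $\b_0\in(4,5)$ is unproved. Two further points need repair rather than mere acknowledgement. First, your function $\mu(\b)=\min\{\Re z_\b(r):\Im z_\b(r)=0\}$ is not obviously continuous, precisely because new crossings are born as $\b$ passes even integers; the paper sidesteps this by following a single fixed branch $\t(\b)\in(\pi(1-2/\b),\pi(1-1/\b))$, defined for all $\b>4$ by the sign change of $y_\b$ there (inequalities \eqref{verif}), and proving continuity of that branch via the mean value theorem — you should adopt the same device. Second, the heuristic ``loop of radius $2^\b/\b$'' is not a proof that the crossing values tend to $-\infty$; the paper substantiates it by the exact binomial identity for $x_{2n}(\pi(1-1/n))$ and the asymptotics $\binom{2n}{n}\sim 4^n/\sqrt{\pi n}$, and some such computation is required. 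Your compactness argument for attainment of the infimum defining $\b_0$ is fine once one notes that $|z_\b|$ is bounded on compact $\b$-sets (so the zeros $t_n$ stay bounded) and that $(iii)$ keeps them away from $0$.
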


\begin{proof}

\emph{{The proof of $(i)$.}}
First let us derive basic properties of the function $z_\b$.
By simple calculation, we get
\begin{equation}\label{eq.zv}
    x_\b(t)=\Re z_\b(t)=t+\sum_{\nu=1}^\infty\binom{\b}{\nu}
(-1)^\nu\frac{\sin \nu t}\nu
\end{equation}
and
$$
y_\b(t)=\Im z_\b(t)=\sum_{\nu=1}^\infty\binom{\b}{\nu}
(-1)^\nu\frac{1-\cos \nu t}\nu.
$$
These equalities imply that
\begin{equation}\label{+++}
  x_\b(-t)=-x_\b(t)\quad\text{and}\quad y_\b(-t)=y_\b(t),\quad t\in\R.
\end{equation}
Thus, to prove the lemma, it is enough to consider only the case $t>0$.

It is easy to see that for $t\in [0,2\pi]$ we have
\begin{equation}\label{eq1.lem1}
    x_\b(t)=2\pi-x_\b(2\pi-t)=x_\b(2\pi+t)-2\pi,
\end{equation}
\begin{equation}\label{eq2.lem1}
    y_\b(t)=y_\b(2\pi-t)=y_\b(2\pi+t),
\end{equation}
and
\begin{equation*}
    x_\b(\pi k)=\pi k,\quad y_\b(2\pi k)=0,\quad k\in\Z_+.
\end{equation*}

Denote
$$
\g_{\b,k}=\{z_\b(t),\, 2\pi k\le t< 2\pi (k+1)\}\quad\text{and}\quad
\g_{\b}=\bigcup_{k=0}^\infty \g_{\b,k}.
$$
Equalities (\ref{eq1.lem1}) and (\ref{eq2.lem1}) imply that the curve $\g_{\b,0}$ is symmetric with respect to the line $x=\pi$ on the complex plane $\C$. We also have that
\begin{equation}\label{eq4.lem1}
    \g_{\b,k+1}=\g_{\b,k}+2\pi,\quad k\in\Z_+.
\end{equation}
See on Figure~1  the form of the curves $\g_{\b,k}$ in the cases $\b=4$ and $k=0,1,2$.

Note that for $t\in [0,2\pi]$ the following equalities hold
\begin{equation}\label{eq5.lem1}
    x_\b(t)=\frac 2\b\int_{-\b\pi/2}^{\b(t-\pi)/2} \cos\vp \left(2\cos \frac
    \vp\b\right)^\b {\rm d}\vp
\end{equation}
and
\begin{equation}\label{eq6.lem1}
    y_\b(t)=\frac 2\b\int_{-\b\pi/2}^{\b(t-\pi)/2} \sin\vp \left(2\cos \frac
    \vp\b\right)^\b {\rm d}\vp.
\end{equation}
Therefore,
$$
x_\b'(t)=\cos\frac{\b(t-\pi)}{2}\left(2\sin\frac
    t2\right)^\b
\quad \text{and} \quad
y_\b'(t)=\sin\frac{\b(t-\pi)}{2}\left(2\sin\frac
    t2\right)^\b.
$$
Below, these two equalities  will be often used to indicate intervals of monotonicity  of the functions $x_\b$ and $y_\b$.

By Lemma~\ref{lemtran}, we have that if $\b\in\N$, then $z_\b(t)\neq 0$ for all $t\neq 0$. Now, we show that $z_\b(t)\neq 0$ for $\b\in
(0,4)\setminus \N$ and $t>0$.
We split the proof of this fact into several cases.

The simplest case is $\b \in(0,1)$. Indeed,  by (\ref{eq5.lem1})
and (\ref{eq4.lem1}), we have that $x_\b(t)>0$ for $t>0$.
This obviously implies~\eqref{eq0.lem1}.

The next case $\b\in (1,2)$ is also simple. In this case, by
(\ref{eq6.lem1}), (\ref{eq2.lem1}), and (\ref{eq4.lem1}), we get that $y_{\b}(t)<0$
for all $t\in\R_+\setminus \{2\pi k\}_{k\in\Z_+}$, and $x_{\b}(2\pi
k)=2\pi k$, $k\in\Z_+$. Hence, $z_\b(t)\neq 0$ for all $t>0$.

In what follows we deal only with the case $\b\in (3,4)$. The proof of the lemma in the case $\b\in
(2,3)$ is similar to the arguments presented below.

First, let us consider the curve $\g_{\b,0}$ for $\b\in (3,4)$.
Let
$$
I_1=(0,\pi(1-3/\b)]
$$
and
$$
I_j=(\pi(1-(5-j)/\b),\pi(1-(4-j)/\b)],\quad j=2,3,4.
$$
We are going to show that for each $j\in\{1,2,3,4\}$
\begin{equation}\label{eq7.lem1}
z_\b(t)\neq 0,\,\,\, \text{ $t\in I_j$}.
\end{equation}

\emph{1) {The case $j=1$.}} In this case, (\ref{eq7.lem1}) easily follows from the fact that the functions $x_\b$ and $y_\b$ are strictly increasing and positive on $I_1$.

\emph{2) {The case $j=2$.}}
We have
\begin{equation*}
\begin{split}
   x_\b\(\pi\(1-\frac2\b\)\)&= \frac 2\b\int_{-\b\pi/2}^{-\pi} \cos\vp
\left(2\cos \frac
    \vp\b\right)^\b
    {\rm d}\vp \\
    &<\frac 2\b \left(2\cos \frac{3\pi}{2\b}\right)^\b
\sin \frac{\pi\b}2<0.
\end{split}
\end{equation*}
Thus, the function $x_\b$ is strictly decreasing and changes the sign from "$+$"\, to "$-$"\, on $I_2$. At the same time, the function $y_\b$ is strictly increasing and positive on $I_2$. The last fact implies (\ref{eq7.lem1}) for $j=2$.

\emph{3) {The case $j=3$.}}
Let us show that
$
y_\b\(\pi\(1-\frac1\b\)\)<0.
$
We have
\begin{equation*}
    \begin{split}
\frac\b 2 y_\b&\(\pi\(1-\frac1\b\)\)=\int_{-\b\pi/2}^{-\pi/2} \sin\vp
\left(2\cos \frac
    \vp\b\right)^\b
    {\rm d}\vp\\
    &=\left\{\int_{-\b\pi/2}^{-3\pi/2}+\int_{-3\pi/2}^{-\pi}+\int_{-\pi}^{-\pi/2}\right\}\sin\vp
\left(2\cos \frac
    \vp\b\right)^\b
    {\rm d}\vp\\
    &=S_1+S_2+S_3.
     \end{split}
\end{equation*}
One can estimate the integrals $S_i$, $i=1,2,3$,  by the following way:
$$
S_1<\left(2\cos \frac
    {3\pi}{2\b}\right)^\b \cos \frac
    {\pi\b}{2}<\left(2\cos \frac
    {3\pi}{8}\right)^3=(2-\sqrt{2})^{3/2},
$$
\begin{equation*}
  \begin{split}
     S_2&=\left\{\int_{-3\pi/2}^{-5\pi/4}+\int_{-5\pi/4}^{-\pi}\right\}\sin\vp
\left(2\cos \frac
    \vp\b\right)^\b
    {\rm d}\vp\\
    &<\left(2\cos \frac
    {5\pi}{16}\right)^4\frac1{\sqrt 2}+\left(2\cos \frac
    {\pi}{\b}\right)^\b(1-1/{\sqrt 2}),
   \end{split}
\end{equation*}
and
\begin{equation*}
  \begin{split}
     S_3&=\left\{\int_{-\pi}^{-3\pi/4}+\int_{-3\pi/4}^{-\pi/2}\right\}\sin\vp
\left(2\cos \frac\vp\b\right)^\b{\rm d}\vp\\
&<\left(2\cos \frac
    {\pi}{\b}\right)^\b(1/{\sqrt 2}-1)-\left(2\cos \frac
    {\pi}{4}\right)^3\frac1{\sqrt 2}.
   \end{split}
\end{equation*}
Combining the above estimates for $S_i$, we get
$$
y_\b\(\pi\(1-\frac1\b\)\)<\frac 23\bigg[(2-\sqrt{2})^{3/2}+\frac
1{\sqrt{2}}\bigg(\left(2\cos \frac
    {5\pi}{16}\right)^4-2^{3/2}\bigg)\bigg]<0.
$$

We have that the functions $x_\b$ and $y_\b$ are strictly decreasing,
the function $y_\b$ changes the sign from "$+$"\, to "$-$"\,, but $x_\b$ is negative on $I_3$.
Combining these facts, we get that (\ref{eq7.lem1}) holds for $j=3$.

\emph{4) {The case $j=4$.}}
We have that the function $y_\b$ is strictly decreasing and negative on $I_4$, the function $x_\b$ is strictly increasing and changes the sign from  "$-$"\, to "$+$"\, on this interval. Therefore, $z_\b(t)\neq 0$ for all
$t\in I_4$.

Thus, we have shown that $z_\b(t)\neq 0$ for all $t\in
(0,\pi]$. Taking into account that the curve $\g_{\b,0}$ is symmetric with respect to the line  $x=\pi$, we get that $z_\b(t)\neq
0$ for all $t\in (\pi,2\pi]$, too.

\medskip

Now let us consider the curves $\g_{\b,k}$ for $k\in\N$. In view of~\eqref{eq4.lem1}, to finish the proof of part $(i)$
we need to show that
\begin{equation}\label{eq12.lem1}
    \g_{\b,1}\in \{z\in\C\,:\, \Re z>0\}.
\end{equation}
Note that $\g_{\b,1}=\g_{\b,0}+2\pi$. Thus, to verify
(\ref{eq12.lem1}) we only need to investigate the  extremal points of the function
$x_\b(t)$ for all $\b\in (3,4)$ and $2\pi\le t\le 4\pi$. From the results obtained above, it follows that the points
$\tau_0=\pi(3-3/\b)$, $\tau_1=\pi(3-1/\b)$, $\tau_2=\pi(3+1/\b)$, and
$\tau_3=\pi(3+3/\b)$ are extremal for $x_\b(t)$. It is easy to see that it is enough to consider
$x_\b$ at the points $\tau_1$ and $\tau_3$ (see Figure~1).

  \begin{center}
  \includegraphics[width=9cm]{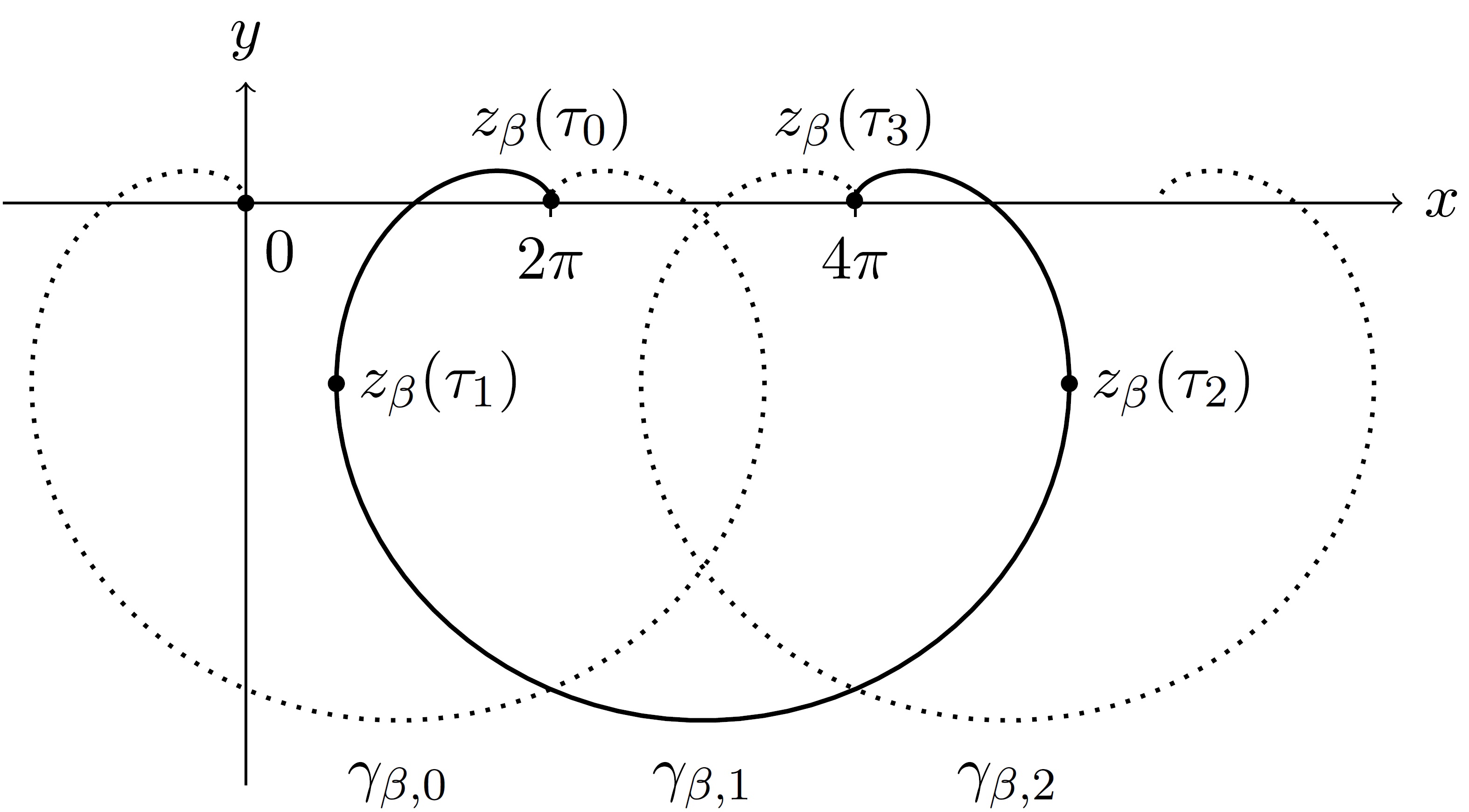}
 \end{center}
 \begin{center}
{\textbf{Figure 1.}}
\end{center}

Simple estimates show that
$x_\b(\tau_3)=4\pi-x_\b(\pi(1-3/\b))>0$. We also have
\begin{equation}\label{eq13.lem1}
x_\b(\tau_1)=2\pi+x_\b\(\pi\(1-\frac1\b\)\).
\end{equation}
Using~\eqref{eq5.lem1}, we get
\begin{equation}\label{dopp}
  \begin{split}
     x_\b\(\pi\(1-\frac1\b\)\)&=\left\{\int_{-\pi\b/2}^{-3\pi/2}+\int_{-3\pi/2}^{-\pi/2}\right\}\frac 2\b\cos\vp \left(2\cos \frac
    \vp\b\right)^\b {\rm d}\vp\\
    &=S_1+S_2.
   \end{split}
\end{equation}
It is evident that $S_1>0$. The integral $S_2$ can be estimated by the following way
\begin{equation}\label{eq14.lem1}
\begin{split}
    S_2&=\frac 2\b \sum_{k=2}^5 \int_{-\pi(k+1)/4}^{-\pi k/4}\cos
    \vp\left( 2\cos\frac\vp\b\right)^\b {\rm d}\vp\\
    &>\frac 2\b \sum_{k=2}^5 \left( 2\cos\frac{\pi k}{16}\right)^\b\int_{-\pi(k+1)/4}^{-\pi k/4}\cos
    \vp {\rm d}\vp\\
    &=-\frac 2\b \bigg[\bigg(1-\frac1{\sqrt 2}\bigg)\bigg\{\left( 2\cos\frac{5\pi}{16}\right)^\b+\left( 2\cos\frac{\pi }{8}\right)^\b\bigg\}\\
    &\quad\quad\quad\quad\quad\quad\quad\quad\quad+
    \frac1{\sqrt 2}\bigg\{\left( 2\cos\frac{\pi}{4}\right)^\b+\left( 2\cos\frac{3\pi
    }{16}\right)^\b\bigg\}\bigg].
\end{split}
\end{equation}
From~\eqref{eq14.lem1}, taking into account that the function $f(x)={a^x}/{x}$ is increasing for $a>1$ and $x>1$, we get
\begin{equation*}
  \begin{split}
    S_2>-\frac 12 \bigg[\bigg(1-\frac1{\sqrt 2}\bigg)\bigg\{&\left(
2\cos\frac{5\pi}{16}\right)^4+(\sqrt{2}+2)^2\bigg\}\\
&+
    \frac1{\sqrt 2}\bigg\{2^2+\left( 2\cos\frac{3\pi
    }{16}\right)^4\bigg\}\bigg]>-2\pi.
   \end{split}
\end{equation*}
The last estimate together with (\ref{dopp}) and (\ref{eq13.lem1}) implies that $x_\b(\tau_1)>0$. Therefore, we have (\ref{eq12.lem1}).

Finally, combining (\ref{eq12.lem1}) and  (\ref{eq4.lem1}), we prove the first part of Lemma~\ref{lem1}.

\medskip

\emph{{The proof of $(ii)$.}}
To prove the second part of the lemma, it is enough to investigate the curve
$$
\Gamma_{\b,1}=\{z_\b(t)\,:\, \pi(3-2/\b)\le t\le
3\pi\}\quad\text{for}\quad \b\in[4,5]
$$
(see Figure 2 below).

By analogy with the proof of the first part of the lemma, taking into account the equality
$$
\Gamma_{\b,1}=\{z_\b(t)\,:\, \pi(1-2/\b)\le t\le \pi\}+2\pi,
$$
we get
\begin{equation}\label{eqDop.lem1}
    y_\b(\pi(3-2/\b))>0\quad\text{and}\quad y_\b(3\pi)<0\quad\text{for all}\quad  \b\in[4,5].
\end{equation}
Moreover, the functions $x_\b$ and $y_\b$ are strictly decreasing on $[\pi(3-2/\b),\\\pi(3-1/\b))$. At the same time, we see that the function $x_\b$ is strictly increasing and $y_\b$ is strictly decreasing on  $[\pi(3-1/\b),3\pi]$.
Combining these facts, we have that the curve  $\Gamma_{\b,1}$ intersects the line $y=0$ only once. Thus, one can define the function  $\vt:[4,5]\mapsto
(\pi(3-2/\b),3\pi)$ by the rule
\begin{equation}\label{*}
  {y}_\b|_{[\pi(3-2/\b),3\pi]}(\vt(\b))=0,
\end{equation}
where ${y}_\b|_A$ denotes a restriction of the function  $y_\b$ on some set
$A$.

Let us consider the function
$$
F(\b)=x_\b(\vt(\b)).
$$
We need to verify that $F(\b)$ is a continuous function on the interval $(4,5)$. First, let us show that the function $\vt(\b)$ is continuous on $(4,5)$.

Let $4\le\b'<\b''\le 5$. Without loss of generality, we can suppose that $\vt(\b')<\vt(\b'')$. Using~\eqref{*}, \eqref{eq2.lem1}, and~\eqref{eq6.lem1}, we obtain
\begin{equation}\label{**}
  \begin{split}
  y_{\b'}(\vt(\b''))-y_{\b''}(\vt(\b''))&=y_{\b'}(\vt(\b''))\\
  &=y_{\b'}(\vt(\b''))-y_{\b'}(\vt(\b'))\\
  &=y_{\b'}(\vt(\b'')-2\pi)-y_{\b'}(\vt(\b')-2\pi)\\
  &=\frac2{\b'}\int_{\frac{\b'}2(\vt(\b')-3\pi)}^{\frac{\b'}2(\vt(\b'')-3\pi)}\sin \vp
      \left(2\cos\frac{\vp}{\b'}\right)^{\b'}{\rm d}\vp.
  \end{split}
\end{equation}
By~\eqref{**} and the mean value theorem, there exists a point $\xi\in({\b'}(\vt(\b')$ $-3\pi)/2, {\b'}(\vt(\b'')-3\pi)/2)$ such that
\begin{equation}\label{oo}
  y_{\b'}(\vt(\b''))-y_{\b''}(\vt(\b''))=\sin \xi
      \left(2\cos\frac{\xi}{\b'}\right)^{\b'}\(\vt(\b'')-\vt(\b')\).
\end{equation}
By~\eqref{eqDop.lem1} and continuity of $y_\b(t)$, there exists a sufficiently small $\d>0$ such that
$
  \pi\(3-2/\b\)+\d<\vt(\b)<3\pi-\d.
$
Therefore, we get
\begin{equation*}\label{o}
-\pi+2\d<-\pi+\frac{\b' \d}{2}<\xi<-\frac{\b' \d}{2}<-2\d.
\end{equation*}
Taking into account these inequalities, we obtain
\begin{equation}\label{ooo}
  \left| \sin \xi
      \left(2\cos\frac{\xi}{\b'}\right)^{\b'} \right|> \sin(2\d)\left(2\cos\(\frac\pi4-\frac\d2\)\right)^{4}=C(\d)>0.
\end{equation}
Thus, by~\eqref{oo} and~\eqref{ooo}, we have
\begin{equation}\label{oooo}
|\vt(\b'')-\vt(\b')|<\frac1{C(\d)}|y_{\b'}(\vt(\b''))-y_{\b''}(\vt(\b''))|.
\end{equation}
Since $y_\b(t)$ is a continuous function of $\b$, from~\eqref{oooo}, we get that the function $\vt(\b)$ is also continuous on $(4,5)$.

Now, taking into account that $x_\b(t)$ is continuous on $(4,5)\times (2\pi,3\pi)$, we get  that the function $F(\b)$ is continuous on $(4,5)$.

Next, from~\eqref{eq12.lem1}, we get that $F(4)>0$. Thus, if we show that
$F(5)<0$, then, by the intermediate value theorem,  we can find  $\b_0\in (4,5)$ such that
$F(\b_0)=0$. This and~\eqref{*} will imply that
$z_{\b_0}(f(\b_0))=0$.

Indeed, it is easy to see that the functions $x_5$ and $y_5$ are strictly decreasing on $(13\pi/5,3\pi)$. Thus, to prove the existence of a point $t_0\in (13\pi/5,3\pi)$ such that
\begin{equation*}
   x_{5}(t_0)<0\quad\text{and}\quad y_{5}(t_0)=0,
\end{equation*}
it is enough to verify that the origin of  $\Gamma_{5,1}$ lies in the first quadrant, the extremal point  of the curve (relating to the extremal point of the function $x_5$) lies in the third quadrant, and there exists an intermediate point that lies in the second quadrant.

  \begin{center}
  \includegraphics[width=7cm]{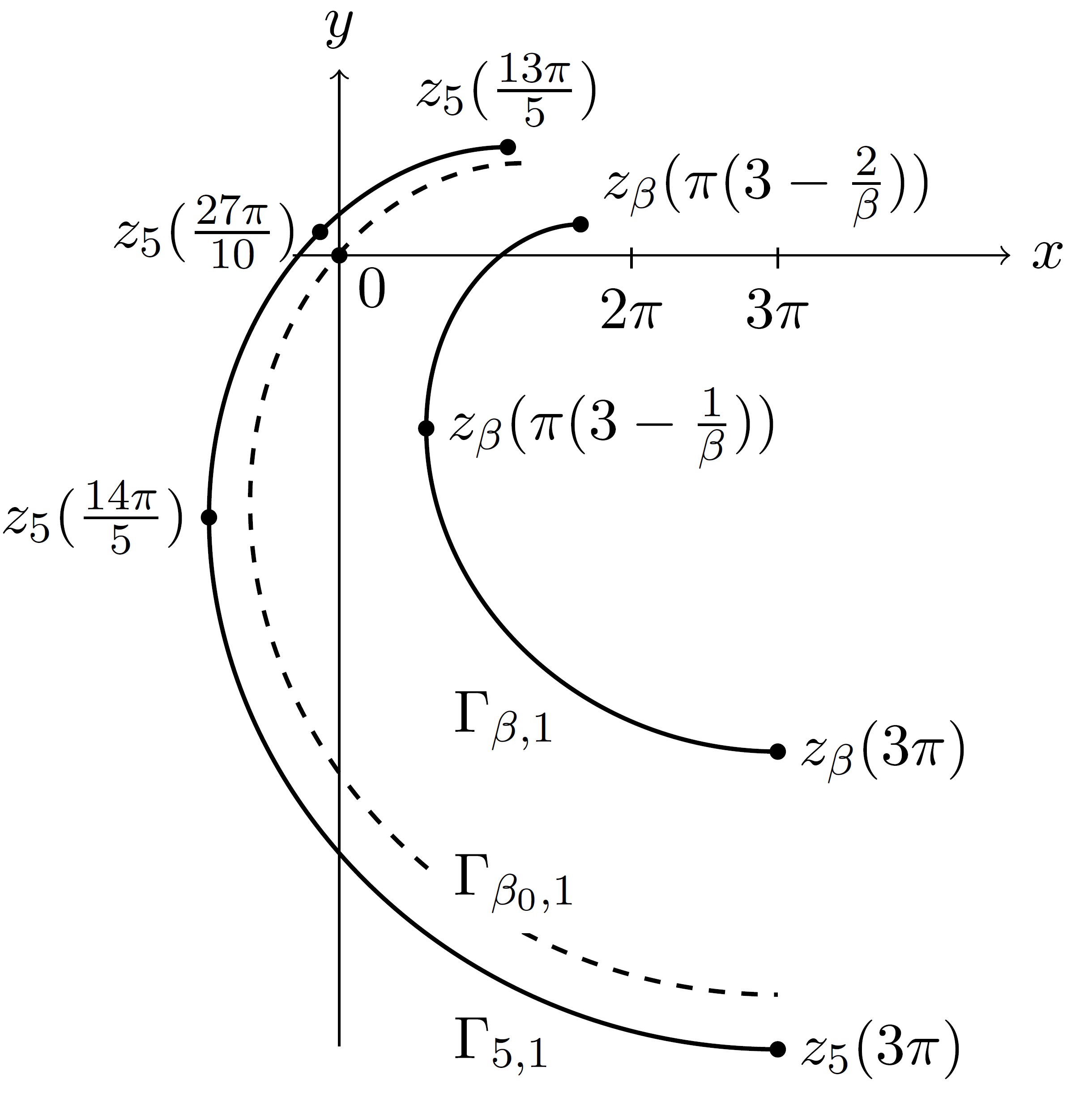}
 \end{center}
\begin{center}
\textbf{Figure 2.}
\end{center}

Performing simple calculation, we can verify that the origin of the curve  $\Gamma_{5,1}$ takes the value
$
z_5\left({13\pi}/5\right)\approx 3.622+{\rm i}2.327,
$
that is it belongs to the first quadrant; since
$
z_5\left({14\pi}/5\right)\approx -2.803-{\rm i}5.632,
$
the point $z_5(14\pi/5)$, which is the extremal point of the curve $\Gamma_{5,1}$, belongs to the third quadrant;  since
$
z_5\left({27\pi}/{10}\right)\approx -0.413+{\rm i}0.504,
$
the intermediate point  $z_5(27\pi/10)$ belongs to the second quadrant (see also Figure 2 in which $4<\b<4.5$ and $\b_0\approx 4.85$).

Now, let us prove assertion $(ii)$ for any $k\in \N$.
First, we show that for each $\b>4$
\begin{equation}\label{verif}
    y_\b\(\pi \(1-\frac 2\b\)\)>0\quad\text{and}\quad y_\b\(\pi\(1-\frac1\b\)\)<0.
\end{equation}
Let us verify the first inequality. The second one can be proved similarly.

Let $k_0$ be the smallest natural number such that  ${\b
\pi}/2\ge (2k_0+1)\pi$. Then, using~\eqref{eq6.lem1}, we have
\begin{equation*}
\begin{split}
      &\frac \b 2 y_\b\(\pi \(1-\frac 2\b\)\)\\
      &=\left\{\int_{-\frac{ \b \pi}{2}}^{-\pi(2k_0+1)}+\sum_{k=k_0}^1 \int_{-(2k+1)\pi}^{-(2k-1)\pi} \right\}
    \sin\vp \left(2\cos \frac
    \vp\b\right)^\b {\rm d}\vp\\
    &=S_1+S_2.
\end{split}
\end{equation*}
It is evident that $S_1\ge 0$. Thus, to verify~\eqref{verif}, it is enough to show that $S_2>0$. But this easily follows  from the fact that for each
$k\in\N$ we have
\begin{equation*}
\begin{split}
      \int_{-(2k+1)\pi}^{-(2k-1)\pi}&\sin\vp \left(2\cos \frac
    \vp\b\right)^\b {\rm d}\vp\\
    &> \(2 \cos\frac{2\pi k}{\b}\)^\b
    \left\{\int_{-(2k+1)\pi}^{-2k \pi}+\int_{-2k\pi}^{-(2k-1) \pi}\right\}\sin\vp {\rm d}\vp=0.
\end{split}
\end{equation*}

By analogy with the proof of part $(i)$, we see that the function $y_\b$ is strictly decreasing and changes the sign from "$+$"\, to "$-$"\, on
$(\pi(1-2/\b),\pi)$ for all $\b>5$. Therefore, as above, for any $\b>5$ one can define a function $\t : [5,\infty)\mapsto (\pi\(1-2/\b\),\pi\(1-1/\b\))$ by the rule
\begin{equation*}\label{*1}
  {y}_\b|_{(\pi\(1-2/\b\),\pi\(1-1/\b\))}(\t(\b))=0.
\end{equation*}

Let us show that for $n\in\N$ one has
\begin{equation}\label{eqkvad}
    x_{2n}\(\pi\(1-\frac 1{n}\)\)\to
    -\infty\quad\text{as}\quad
    n\to\infty.
\end{equation}
Indeed, (\ref{eq.zv}) implies that
\begin{equation*}\label{eq.prN1}
x_{2n}\(\pi\(1-\frac 1{n}\)\)=\pi\(1-\frac 1{n}\)+\sum_{\nu=1}^{2n}
\binom {2n}{\nu} \frac{(-1)^\nu}{\nu} \sin \pi \(1-\frac 1n\)\nu.
\end{equation*}
Using the formula
$
\binom{2n}{\nu}=\binom{2n}{n-\nu}=\binom{2n}{\nu+n},
$
we derive
\begin{equation*}
\begin{split}
\sum_{\nu=1}^{2n} \binom {2n}\nu \frac{(-1)^\nu}{\nu} {\sin \pi \(1-\frac
1n\)}{\nu}&=-\sum_{\nu=1}^{2n} \binom {2n} \nu\frac 1\nu \sin \frac{\pi
\nu}{n}\\
&=-\sum_{\nu=1}^{n}\binom {2n}\nu \(\frac 1\nu-\frac
1{\nu+n}\)\sin\frac{\pi \nu}{n}.
\end{split}
\end{equation*}
Combining these equalities and the well-known asymptotic of the binomial coefficients
$$
\binom{2n}{n}\sim \frac{4^n}{\sqrt {\pi n}}\quad\text{as}\quad n\to\infty,
$$
we obtain that (\ref{eqkvad}) is fulfilled.

Now, the decreasing of the function $x_\b(t)$ on $(\pi\(1-2/\b\),\pi\(1-1/\b\))$  and $(\ref{eqkvad})$ imply that for any $N_0\in
2\N$ one can find $N_1\in 2\N$ such that
$$
x_{N_1}(\t({N_1}))<x_{N_1}\(\pi\(1-\frac
2{N_1}\)\)<x_{N_0}(\t({N_0})).
$$
At that, choosing $N_1$ to be sufficiently large number, we obtain that
$$
x_{N_0}(\t({N_0}))-x_{N_1}(\t({N_1}))>2\pi.
$$
Then, taking into account~\eqref{eq4.lem1}, one can choose
 $k_1\in \N$ such that
\begin{equation}\label{oo1}
  x_{N_1}(\t({N_1})+2\pi k_1)<0\quad\text{and}\quad
x_{N_0}(\t({N_0})+2\pi k_1)>0.
\end{equation}

Denote
$$
F_1(\b)=x_\b(\t(\b)+2\pi k_1).
$$
By analogy with the above proof of $(ii)$ in the case $k=0$, we can show that the function $F_1(\b)=x_\b(\t(\b)+2\pi k_1)$ is continuous.
Then, taking into account~\eqref{oo1} and applying the intermediate value theorem to $F_1$,  we can find the points  $t_1>\pi (1-  2/{N_0})+2\pi k_1$ and
$\b_1>\b_0$ such that  $z_{\b_{1}}(t_1)=0$.

Repeating this scheme, we obtain infinite sets of points  $\{\b_k\}_{k\in\Z_+}$ and $\{t_k\}_{k\in\Z_+}$, for which assertion  $(ii)$ holds.

\medskip

\emph{{The proof of $(iii)$.}}
Suppose to the contrary that there exists $t_0\in (0,\pi)$ such that for some $\b>4$ one has
\begin{equation}\label{eqlem2iii.3}
    \psi_\b(t_0)=0.
\end{equation}
Then from  (\ref{eq5.lem1}) and (\ref{eq6.lem1}), we get
\begin{equation*}\label{eqlem2iii.4}
    \int_0^{\frac \b 2 t_0}e^{\i \vp}\(\sin \frac\vp\b\)^\b {\rm d}\vp=0.
\end{equation*}
This equality  implies that
\begin{equation*}
    \int_0^{\frac \b 2 t_0}\sin(\vp-\vp_0) \(\sin \frac\vp\b\)^\b {\rm d}\vp =0
\end{equation*}
and, therefore, one has
\begin{equation}\label{eqlem2iii.5}
    \int_{\vp_0}^{2\pi l_0}\sin \vp\(\sin \frac{\vp-\vp_0}\b\)^\b
    {\rm d}\vp=0,
\end{equation}
where we put
$
\vp_0=2\pi l_0-\b t_0/2
$
and $l_0=\min\{l\in \N\,:\, \b
t_0/2\le 2\pi l\}$.

From~\eqref{eqlem2iii.5}, we obtain
\begin{equation}\label{eqlem2iii.6}
\begin{split}
     \int_{\vp_0}^{2\pi}&\(\sin \frac{\vp-\vp_0}\b\)^\b \sin \vp
    {\rm d}\vp\\
    &+\sum_{j=1}^{l_0-1}\int_{2\pi j}^{2\pi (j+1)}\(\sin \frac{\vp-\vp_0}\b\)^\b \sin \vp
    {\rm d}\vp=0
\end{split}
\end{equation}
(if $l_0=1$, then this sum contains only one term).
It is easy to see that all summands in the above sum are negative.
Indeed, it is obvious that for $\vp_0 \in [\pi,2\pi)$ we have
$$
\int_{\vp_0}^{2\pi}\(\sin \frac{\vp-\vp_0}\b\)^\b \sin \vp
    {\rm d}\vp<0.
$$
At the same time, if $\vp_0 \in [0,\pi)$, then
\begin{equation*}
    \begin{split}
\int_{\vp_0}^{2\pi}\(\sin \frac{\vp-\vp_0}\b\)^\b \sin \vp
    {\rm d}\vp=\left\{\int_{\vp_0}^{\pi}+\int_{\pi}^{2\pi}\right\}\(\sin \frac{\vp-\vp_0}\b\)^\b \sin \vp
    {\rm d}\vp\\
<\(\sin \frac{\pi-\vp_0}\b\)^\b\int_{\vp_0}^{\pi}\sin\vp {\rm d}\vp+\(\sin
\frac{\pi-\vp_0}\b\)^\b \int_{\pi}^{2\pi}\sin\vp {\rm d}\vp<0.
     \end{split}
\end{equation*}
By analogy, we can prove that for any $l_0>1$ and
$j=1,\dots,l_0-1$ one has
$$
\int_{2\pi j}^{2\pi (j+1)}\(\sin \frac{\vp-\vp_0}\b\)^\b \sin \vp
    {\rm d}\vp<0.
$$
Thus, the sum in  (\ref{eqlem2iii.6}) is negative, which is a contradiction to  (\ref{eqlem2iii.3}).
\end{proof}


\subsection{Properties of Fourier multipliers}
To prove Theorems~\ref{th4} and~\ref{th5}, we need some facts about Fourier multipliers $\{\l_k\}_{k\in \Z}$.

In the case $\l_k=g(\e k)$, $g\in C(\R)$, $\e>0$, it is important to ascertain whether the function $g$ lies in the Banach algebra
$$
B(\R)=\bigg\{g\,:\,g(t)=\int_{-\infty}^\infty e^{-\i t u}{\rm d}\mu(u),\quad
\Vert g\Vert_B={\rm var} \mu<\infty\bigg\},
$$
where $\mu$ is a complex Borel measure which is finite on  $\R$, and
$\var\mu$ is the total
variation of~$\mu$. The point is that for any $1\le p\le\infty$, we have
$$
\sup_{\e>0}\Vert \{g(\e k)\}\Vert_{M_p}\le\sup_{\e>0}\Vert \{g(\e k)\}\Vert_{M_1}=\Vert g\Vert_B
$$
(see \cite{SW} or \cite[Ch. 7]{TB}).

In what follows, we will use the following comparison principle (see \cite[7.1.11 and 7.1.14]{TB}).
\begin{lemma}\label{lem3}
Let $\vp$ and $\psi$ belong to $C(\R)$ and
$$
\Phi_\e (f;x)\sim\sum_{k\in\Z} \vp(\e k) \widehat{f}_k e^{\i kx},\quad
\Psi_\e(f;x)\sim\sum_{k\in\Z} \psi(\e k) \widehat{f}_k e^{\i kx},\quad f\in L_1(\T),
$$
where the series on the right are Fourier series.
If $g=\vp/\psi\in B(\R)$, then for all $f\in L_p(\T)$, $1\le p\le\infty$, and $\e>0$ we have
$$
\Vert \Phi_\e(f)\Vert_p \le \Vert g\Vert_B \Vert \Psi_\e(f)\Vert_p.
$$
\end{lemma}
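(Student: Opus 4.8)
The plan is to reduce everything to the key fact recalled just before the lemma: if $g\in B(\R)$, then $\sup_{\e>0}\Vert\{g(\e k)\}\Vert_{M_p}\le\Vert g\Vert_B$ for every $1\le p\le\infty$. So the whole task is to show that $\Psi_\e(f)\mapsto\Phi_\e(f)$ is exactly the multiplier operator on $\T$ associated with the sequence $\{g(\e k)\}_{k\in\Z}$, applied to $\Psi_\e(f)$, and then to invoke that inequality. First I would compute the Fourier coefficients: since $\Psi_\e(f;x)\sim\sum_k\psi(\e k)\widehat f_k e^{\i kx}$, the $k$-th Fourier coefficient of $\Psi_\e(f)$ is $\psi(\e k)\widehat f_k$. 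Multiplying by $g(\e k)=\vp(\e k)/\psi(\e k)$ gives $\vp(\e k)\widehat f_k$, which is precisely the $k$-th Fourier coefficient of $\Phi_\e(f)$. Hence formally $\Phi_\e(f)=\Lambda_\e\big(\Psi_\e(f)\big)$, where $\Lambda_\e$ is the Fourier multiplier operator with symbol $\{g(\e k)\}_{k\in\Z}$.

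The second step is to make this rigorous and quantitative. Because $g\in B(\R)$, the sequence $\{g(\e k)\}_{k\in\Z}$ is a Fourier multiplier in $L_p(\T)$ for every $1\le p\le\infty$, with $\Vert\{g(\e k)\}\Vert_{M_p}\le\Vert\{g(\e k)\}\Vert_{M_1}\le\Vert g\Vert_B$, uniformly in $\e>0$. Now I need $\Psi_\e(f)\in L_p(\T)$ so that $\Lambda_\e$ may be applied to it; this is part of the hypothesis in the form in which the lemma is used (the series defining $\Psi_\e(f)$ is the Fourier series of an $L_p$ function), and in the applications of the lemma in this paper $\Psi_\e(f)$ is the averaged difference, which is manifestly in $L_p$. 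Granting $\Psi_\e(f)\in L_p(\T)$, apply $\Lambda_\e$: its output has Fourier coefficients $g(\e k)\cdot\psi(\e k)\widehat f_k=\vp(\e k)\widehat f_k$, i.e. $\Lambda_\e(\Psi_\e(f))$ is the $L_p$ function whose Fourier series is that of $\Phi_\e(f)$; by uniqueness of Fourier series, $\Phi_\e(f)=\Lambda_\e(\Psi_\e(f))$ as elements of $L_p(\T)$, and therefore
\[
\Vert\Phi_\e(f)\Vert_p=\Vert\Lambda_\e(\Psi_\e(f))\Vert_p\le\Vert\{g(\e k)\}\Vert_{M_p}\Vert\Psi_\e(f)\Vert_p\le\Vert g\Vert_B\Vert\Psi_\e(f)\Vert_p.
\]

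The only genuine subtlety — and the step I would treat most carefully — is the division by $\psi$: a priori $\psi(\e k)$ could vanish for some $k$. In the intended applications $\psi$ is of the form $t\mapsto\psi_\b(t)$ or a related function known not to vanish on the relevant set, and the hypothesis $g=\vp/\psi\in C(\R)$ already forces $\vp$ to vanish wherever $\psi$ does (with the quotient extending continuously), so the identity $\vp(\e k)\widehat f_k=g(\e k)\psi(\e k)\widehat f_k$ holds coefficientwise with no ambiguity, including at any common zero. I would state this observation explicitly and then the argument goes through verbatim. Everything else is bookkeeping: matching Fourier coefficients, invoking the uniform $M_p$ bound for symbols in $B(\R)$, and uniqueness of Fourier series. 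Since this is quoted from \cite[7.1.11 and 7.1.14]{TB}, I expect the paper simply to cite it rather than reproduce the proof, but the sketch above is the content.
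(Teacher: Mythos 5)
The paper offers no proof of this lemma: it is quoted verbatim from \cite[7.1.11 and 7.1.14]{TB}, exactly as you anticipated. Your reconstruction is the standard argument and is correct --- identifying $\Phi_\e(f)=\Lambda_\e(\Psi_\e(f))$ with $\Lambda_\e$ the multiplier operator of symbol $\{g(\e k)\}_{k\in\Z}$, invoking $\sup_{\e>0}\Vert\{g(\e k)\}\Vert_{M_p}\le\Vert g\Vert_B$, and correctly noting both that the inequality is vacuous when $\Vert\Psi_\e(f)\Vert_p=\infty$ and that any zeros of $\psi$ are harmless because $g\in B(\R)\subset C(\R)$ forces $\vp(\e k)=g(\e k)\psi(\e k)$ coefficientwise.
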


To verify that the function $g$ belongs to $B(\R)$, one can use the following Beurling's sufficient condition (see~\cite[6.4.2]{TB} or the survey~\cite{LST}).

\begin{lemma}\label{lem4}
  Let a function $g$ be locally absolutely continuous on $\R$, $g\in L_2(\R)$, and $g'\in
    L_2(\T)$. Then
    $$
    \Vert g\Vert_B\le C\(\Vert g\Vert_{L_2(\R)}+\Vert
    g'\Vert_{L_2(\R)}\).
    $$
\end{lemma}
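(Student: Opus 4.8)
The plan is to exhibit $g$ explicitly as the Fourier--Stieltjes transform of an absolutely continuous measure. If $\mathrm{d}\mu(u)=\phi(u)\,\mathrm{d}u$ with $\phi\in L_1(\R)$, then $g(t)=\int_{-\infty}^\infty e^{-\i tu}\phi(u)\,\mathrm{d}u$ and $\Vert g\Vert_B=\var\mu=\Vert\phi\Vert_{L_1(\R)}$. So I would let $\phi$ be the inverse Fourier transform of $g$ and reduce the whole lemma to two claims: that $\phi\in L_1(\R)$, and that $\Vert\phi\Vert_{L_1(\R)}$ admits the asserted bound. Once $\phi\in L_1(\R)$ is known, Fourier inversion (valid since $\phi\in L_1\cap L_2$) recovers $g$ as the Fourier transform of $\phi$, so $g\in B(\R)$ with $\Vert g\Vert_B=\Vert\phi\Vert_{L_1(\R)}$.

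The two hypotheses on $g$ feed in through Plancherel's theorem. From $g\in L_2(\R)$ I get $\phi\in L_2(\R)$ with $\Vert\phi\Vert_{L_2(\R)}\asymp\Vert g\Vert_{L_2(\R)}$. To exploit $g'\in L_2(\R)$ I first note that $g$, being locally absolutely continuous with $g,g'\in L_2(\R)$, satisfies $g(t)\to0$ as $|t|\to\infty$: indeed $gg'\in L_1(\R)$ by Cauchy--Schwarz, so $g^2$ has finite limits at $\pm\infty$, and these must vanish because $g\in L_2(\R)$. This decay lets me integrate by parts and identify the inverse Fourier transform of $g'$ with $-\i u\,\phi(u)$; Plancherel applied to $g'$ then gives $u\phi\in L_2(\R)$ with $\Vert u\phi\Vert_{L_2(\R)}\asymp\Vert g'\Vert_{L_2(\R)}$.

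The heart of the argument is then a one-line splitting of the $L_1$ norm at $|u|=1$:
\[
\int_{-\infty}^\infty|\phi(u)|\,\mathrm{d}u=\int_{|u|\le1}|\phi(u)|\,\mathrm{d}u+\int_{|u|>1}\frac{1}{|u|}\,|u\phi(u)|\,\mathrm{d}u.
\]
For the first piece Cauchy--Schwarz over the interval $[-1,1]$ bounds it by $\sqrt2\,\Vert\phi\Vert_{L_2(\R)}$; for the second, Cauchy--Schwarz together with $\int_{|u|>1}u^{-2}\,\mathrm{d}u=2$ bounds it by $\sqrt2\,\Vert u\phi\Vert_{L_2(\R)}$. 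Summing and inserting the two Plancherel comparisons yields $\Vert\phi\Vert_{L_1(\R)}\le C(\Vert g\Vert_{L_2(\R)}+\Vert g'\Vert_{L_2(\R)})$, which is exactly the claim. The only delicate point I expect is the derivative step in the second paragraph---justifying that differentiation becomes multiplication by $-\i u$ under the weak hypotheses $g,g'\in L_2(\R)$, for which the decay of $g$ at infinity is the crucial ingredient; the splitting estimate itself is elementary, the weight $|u|^{-1}$ being square-integrable away from the origin precisely where $\phi$ need not be, while $\phi$ is square-integrable near the origin where the weight fails.
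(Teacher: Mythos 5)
Your argument is correct, but note that the paper does not prove this lemma at all: it is quoted as Beurling's classical sufficient condition with a citation to \cite[6.4.2]{TB} and to the survey \cite{LST}, so there is no in-paper proof to compare against. What you have written is essentially the standard textbook proof of that condition, and it is complete: realizing $g$ as the Fourier transform of $\phi=\check g\in L_2(\R)$, using the local absolute continuity together with $gg'\in L_1(\R)$ to get decay of $g$ at infinity and hence the identification of $\widehat{g'}$ with $-\i u\,\phi(u)$, and then the Cauchy--Schwarz splitting of $\Vert\phi\Vert_{L_1(\R)}$ at $|u|=1$ against the weights $1$ and $|u|^{-1}$. Two cosmetic points: the hypothesis ``$g'\in L_2(\T)$'' in the statement is a typo for $L_2(\R)$, which you have correctly read as such; and at the final inversion step one should remark that $\hat\phi$ and $g$ agree a priori only almost everywhere, but both are continuous ($g$ being locally absolutely continuous, $\hat\phi$ being the transform of an $L_1$ function), so they agree everywhere and $\Vert g\Vert_B=\Vert\phi\Vert_{L_1(\R)}$ as you claim. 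Your proof could serve as a self-contained replacement for the citation.
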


\section{Proofs of the main results}

\setcounter{section}{4}
\setcounter{equation}{0}\setcounter{theorem}{0}

\begin{proof}[Proof of Theorem~\ref{th1}]

The inequality ${\rm w}_\b(f,h)_p\le C{\w}_\b(f,h)_p$   is obvious.  Let us prove the converse inequality.

Let $n\in \N$ be such that $n=[1/h]$ and let $T_n\in\mathcal{T}_n$ be polynomials of the best approximation of $f$ in $L_p(\T)$, that is $\Vert f-T_n\Vert_p=E_n(f)_p$.

By properties ${(e)}$ and ${(f)}$ of the modulus of smoothness and Lemma~\ref{lemNS} for $\d\in (h/2,h)$, we obtain
\begin{equation}\label{pro1}
  \begin{split}
     \w_\b(f,h)_p^{p_1}&\le \w_\b(f-T_n,h)_p^{p_1}+\w_\b(T_n,h)_p^{p_1}\\
            &\le C\(\Vert f-T_n\Vert_p^{p_1}+\(\frac h\d\)^{\b p_1}\Vert \D_\d^\b T_n\Vert_p^{p_1}\)\\
            &\le C\(\Vert f-T_n\Vert_p^{p_1}+\Vert \D_\d^\b f\Vert_p^{p_1}\).
  \end{split}
\end{equation}
Integrating inequality~\eqref{pro1} by $\d$ over $(h/2,h)$ and applying the Jackson inequality~\eqref{eqI12} and ${(g)}$, we get
\begin{equation}\label{pro2}
  \begin{split}
     \w_\b(f,h)_p^{p_1}&\le  C\(E_n(f)_p^{p_1}+\frac1h\int_{h/2}^h\Vert \D_\d^\b f\Vert_p^{p_1}{\rm d}\d\)\\
     &\le  C\(\w_r\(f,\frac1n\)_p^{p_1}+\frac1h\int_{0}^h\Vert \D_\d^\b f\Vert_p^{p_1}{\rm d}\d\)\\
     &\le  C\(\w_r\(f,h\)_p^{p_1}+\frac1h\int_{0}^h\Vert \D_\d^\b f\Vert_p^{p_1}{\rm d}\d\),
  \end{split}
\end{equation}
where we take $r=\b+\a\in \N$ with $\a>1/p_1-1$.

Next, using the first equivalence from~\eqref{equivmmm} for all $0<p\le\infty$ and ${(c)}$, we obtain
\begin{equation}\label{pro3}
  \begin{split}
    \w_r\(f,h\)_p^{p_1}&\le \frac Ch\int_{0}^h\Vert \D_\d^r f\Vert_p^{p_1}{\rm d}\d=\frac Ch\int_{0}^h\Vert \D_\d^\a\D_\d^\b f\Vert_p^{p_1}{\rm d}\d\\
    &\le \frac Ch\int_{0}^h\Vert \D_\d^\b f\Vert_p^{p_1}{\rm d}\d.
  \end{split}
\end{equation}

Finally, combining~\eqref{pro2} and~\eqref{pro3}, we get ${\w}_\b(f,h)_p\le C{\rm w}_\b(f,h)_p$.
\end{proof}

\begin{proof}[Proof of Theorem~\ref{th3}]
\emph{{The proof of $(i)$}} easily follows from Lemma~\ref{lem0}, Lem\-ma~\ref{lem1} $(i)$, and \eqref{equivmmm}.

\emph{{The proof of $(ii)$}.}
By~\eqref{mainf0}, we have
$$
\frac 1h\int_0^{h} \D_\d^{\b_k} e_n(x){\rm d}\d=\psi_{\b_k}({\rm sign\,}n\cdot t_k)e_n(x).
$$
It remains only to take into account that by Lemma~\ref{lem1} $(ii)$ and equalities~\eqref{+++} one has $\psi_{\b_k}(\pm t_k)=0$ .
\end{proof}

\begin{proof}[Proof of Theorem~\ref{th4}]
It is obvious that
$     \widetilde{\w}_\b(T_n,h)_p \le \w_\b(T_n,h)_p$.
Let us show
\begin{equation}\label{eqprth3.1}
    \w_\b(T_n,h)_p\le C \widetilde{\w}_\b(T_n,h)_p\quad\text{for all} \quad h\in (0,1/n).
\end{equation}

Denote
$$
g_\t(t)=\frac{(1-e^{\i \t t})^\b v(t)}{\psi_\b(t)},\quad \t\in (0,1),
$$
where $v\in C^\infty(\R)$, $v(t)=1$ for $|t|\le 1$ and $v(t)=0$ for
$|t|> 2$. Note that
\begin{equation}\label{form}
  \D_\d^\b f(x)\sim\sum_{k\in\Z}(1-e^{\i k\d})^\b \widehat{f}_k e^{\i kx},\quad f\in L_1(\T).
\end{equation}
Thus, by Lemma~\ref{lem3}, to prove (\ref{eqprth3.1}) it is enough to verify
\begin{equation}\label{eqprth3.2}
 \sup_{\t \in (0,1)}\Vert g_\t\Vert_B<\infty.
\end{equation}

By Lemma~\ref{lem1} $(iii)$ we have that
$\psi_\b(t)\neq 0$ for $0<|t|<\pi$.
Moreover, it is easy to see that $g_\t
\in C^\infty(\R)$ and
$\sup_{\t \in (0,1)}\Vert g_\t^{(k)}\Vert_{L_\infty(\R)}<\infty$ for each $k\in \Z_+$.
Thus, using Lemma~\ref{lem4} we see that
(\ref{eqprth3.2}) holds.
\end{proof}

\begin{proof}[Proof of Theorem~\ref{thbest}]
In view of~\eqref{lemmm}, we only need to verify that
\begin{equation}\label{preqthEqmods1}
{\w}_\b(f,h)_p\le C\(\widetilde{\w}_\b(f,h)_p+E_{[1/h]}(f)_p\).
\end{equation}
Let $n=[1/h]$ and let $T_n\in\mathcal{T}_n$ be such that $\Vert f-T_n\Vert_p=E_n(f)_p$. Using properties $(e)$ and $(f)$ and Theorem~\ref{th4}, we obtain
\begin{equation*}
  \begin{split}
     \w_\b(f,h)_p&\le \w_\b(f-T_n,h)_p+\w_\b(T_n,h)_p\\
            &\le C\Vert f-T_n\Vert_p+\widetilde{\w}_\b(T_n,h)_p\\
            &\le C\Vert f-T_n\Vert_p+\widetilde{\w}_\b(f,h)_p\\
            &\le C\(\widetilde{\w}_\b(f,h)_p+E_{[1/h]}(f)_p\).
  \end{split}
\end{equation*}
Thus, we have~\eqref{preqthEqmods1}.
\end{proof}

\begin{proof}[Proof of Theorem~\ref{th5}]

First let us prove that
\begin{equation}\label{ab1}
  \w_{\b;\, \a}^*(f,h)_p\le C\w_\b(f,h)_p.
\end{equation}

Let $n=[1/h]$ and let $T_n\in\mathcal{T}_n$ be such that $\Vert f-T_n\Vert_p=E_n(f)_p$. Then, by properties  $(a), (b), (c)$, and $(g)$,  and the Jackson inequality~\eqref{eqI12}, we derive
\begin{equation}\label{ab2}
\begin{split}
  \w_{\b;\, \a}^*(f,h)_p &\le \w_{\b;\, \a}^*(f-T_n,h)_p+\w_{\b;\, \a}^*(T_n,h)_p\\
  &\le C\Vert f-T_n\Vert_p+\w_{\b;\, \a}^*(T_n,h)_p\\
  &\le C\w_\b(f,h)_p+\w_{\b;\, \a}^*(T_n,h)_p.
\end{split}
\end{equation}
Thus, it remains to show
\begin{equation}\label{ab3}
  \w_{\b;\, \a}^*(T_n,h)_p\le C\w_{\b}(f,h)_p.
\end{equation}

By Lemma~\ref{lemNS} with $0<\d_1, \d_2<h$ and by $(b)$ and $(c)$, we obtain
\begin{equation}\label{ab4}
  \begin{split}
     \Vert \D_{\d_1}^{\b-\a}\D_{\d_2}^{\a}T_n\Vert_p&\le C \Vert \D_{h}^{\b-\a}\D_{\d_1}^{\a}T_n\Vert_p= C \Vert \D_{\d_1}^{\a}\D_{h}^{\b-\a}T_n\Vert_p\\
     &\le C\Vert \D_{h}^{\b}T_n\Vert_p\le C\w_{\b}(T_n,h)_p.
   \end{split}
\end{equation}
At the same time, as above, by $(e)$, $(f)$, and~\eqref{eqI12}, we get
\begin{equation}\label{ab5}
  \w_{\b}(T_n,h)_p\le C\Vert f-T_n\Vert_p+\w_{\b}(f,h)_p\le C\w_{\b}(f,h)_p.
\end{equation}
Now, by the definition of $\w_{\b;\, \a}^*(f,h)_p$, \eqref{ab4}, and \eqref{ab5}, inequality~\eqref{ab3} easily follows.

Combining ~\eqref{ab2} and~\eqref{ab3}, we get~\eqref{ab1}.

To prove the converse inequality
\begin{equation}\label{abc1}
  \w_\b(f,h)_p\le C\w_{\b;\, \a}^*(f,h)_p,
\end{equation}
we  use the de la Vall\'ee-Poussin means of $f$ given by
$$
V_h(f;x)=\sum_{k\in \Z} v\(kh\)\widehat{f}_k e^{\i kx},
$$
where the function $v$ is defined in the proof of Theorem~\ref{thbest} (in addition we suppose that $0\le v(t)\le 1$ for all $t\in\R$).

By property $(e)$, we have
\begin{equation*}
  \w_\b(f,h)_p\le \w_\b(V_h(f),h)_p+\w_\b(f-V_h(f),h)_p.
\end{equation*}
Therefore, the proof of~\eqref{abc1} will follows from the following two inequalities
\begin{equation*}
  \w_\b(V_h(f),h)_p\le C \w_{\b;\, \a}^*(f,h)_p
\end{equation*}
and
\begin{equation}\label{abc4}
  \w_\b(f-V_h(f),h)_p\le C \w_{\b;\, \a}^*(f,h)_p.
\end{equation}

The first inequality can be verified by repeating the proof of Theorem~\ref{th4} with the function
$$
g_{1,\t}(t)=\frac{ (1-e^{\i\t t})^\b v(t)}{\psi_{\b-\a}(t)\psi_\a(t)}
$$
instead of $g_\t$.

Let us verify that \eqref{abc4} holds. By Lemma~\ref{lem3} and \eqref{form} it is enough to check that
\begin{equation*}
  g_{2,\t}(t)=\frac{(1-e^{\i\t t})^\b(1-v(t))}{\psi_{\b-\a}(t)\psi_\a(t)}\in B(\R)
\end{equation*}
and
\begin{equation}\label{abc6}
 \sup_{\t \in (0,1)}\Vert g_{2,\t}\Vert_B<\infty.
\end{equation}
Indeed, it is easy to see that
\begin{equation}\label{abc7}
 \Vert (1-e^{\i\t (\cdot)})^\b\Vert_B\le 2^\b.
\end{equation}
At the same time we have that $\psi_\a\in B(\R)$, $\psi_\a(t)\neq 0$ for $t\in \R\setminus\{0\}$, and $\lim_{|t|\to\infty}\psi_\a(t)=1$. Thus, by the Wiener-L\'evy theorem (see~\cite[Theorem~4.4]{LST}), we get
\begin{equation}\label{abc8}
\frac{(1-v(t))^{1/2}}{\psi_\a(t)}\in B(\R).
\end{equation}
By analogy, we have
\begin{equation}\label{abc9}
\frac{(1-v(t))^{1/2}}{\psi_{\b-\a}(t)}\in B(\R).
\end{equation}
Finally, combining \eqref{abc7}--\eqref{abc9} and taking into account that $B(\R)$ is the Banach algebra, we derive \eqref{abc6} and, therefore,~\eqref{abc4}.

Theorem~\ref{th5} is proved.
\end{proof}

\section*{Acknowledgements}

The author thanks to R.M. Trigub for the proposed problem and to A.A.~Dovgoshey for the valuable discussions.

This research has received funding from the European Union's Horizon 2020 research and innovation
programme under the Marie Sklodowska-Curie grant agreement No 704030.



\end{document}